\tikzstyle{branch} = [circle,inner sep=0pt,minimum size=1mm,fill=black,draw=black]
\tikzstyle{vertex}=[circle, shading = ball, ball color = white!100!white, minimum size = 15pt, draw, inner sep=0pt]  %This says how the 
\newcommand{\vertex}{\node[vertex]}           %Next, we make a \vertex command as a shorthand in place of \node[vertex] to get that style.
\DeclareMathOperator{\proj}{proj}
\DeclareMathOperator{\cl}{cl}
\newtheorem{theorem}{Theorem}}
\newtheorem{lemma}[theorem]{Lemma}}
\newtheorem{proposition}[theorem]{Proposition }}
\newtheorem{definition}[theorem]{Definition}}
\newtheorem{remark}[theorem]{Remark}}
\newtheorem{assumption}{Assumption}}
\DeclareRobustCommand{\qed}{%
	\ifmmode % if math mode, assume display: omit penalty etc.
	\else \leavevmode\unskip\penalty9999 \hbox{}\nobreak\hfill
	\fi
	\quad\hbox{\qedsymbol}}
\newcommand{\openbox}{\leavevmode
	\hbox to.77778em{%
		\hfil\vrule
		\vbox to.675em{\hrule width.6em\vfil\hrule}%
		\vrule\hfil}}
\newcommand{\qedsymbol}{\openbox}
\newenvironment{proof}[1][\proofname]{\par
	\normalfont
	\topsep6\p@\@plus6\p@ \trivlist
	\item[\hskip\labelsep\itshape
	#1.]\ignorespaces
}{%
	\qed\endtrivlist
}
\newcommand{\proofname}{Proof}
\def\calx{{\cal X}}
\def\cali{{\cal I}}
\def\caln{{\cal N}}
\def\calt{{\cal T}}
\def\cals{{\cal S}}
\def\cale{{\cal E}}
\DeclareMathAlphabet{\mymathbb}{U}{bbold}{m}{n}
\newcommand{\col}{\mbox{col}}
\newcommand{\diag}{\mbox{blockdiag}}
\newcommand{\bone}{\mathds{1}}
\newcommand{\bze}{\mymathbb{0}}
\newcommand{\bx}{\bm{x}}
\newcommand{\by}{\bm{y}}
\newcommand{\bd}{\bm{d}}
\newcommand{\bsg}{\bm{\sigma}}
\newcommand{\bpsi}{\bm{\psi}}
\newcommand{\bphi}{\bm{\phi}}
\newcommand{\bxi}{\bm{\xi}}
\newcommand{\bpi}{\bm{\Pi}}
\newcommand{\bnu}{\bm{\nu}}
\newcommand{\ba}{\bm{A}}
\newcommand{\bk}{\bm{K}}
\newcommand{\bt}{\bm{H}}
\newcommand{\avg}[1]{s(#1)}
\newcommand{\R}{\mathbb{R}}
\newcommand{\fpart}[2]{ \frac{\partial #1}{\partial #2}}
\DeclareMathOperator*{\argmin}{arg\,min}
\DeclareMathOperator{\ima}{im}
\author{Mehran Shakarami, Claudio De Persis, and Nima Monshizadeh%
	%\thanks{*This work was not supported by any organization}% <-this % stops a space
	\thanks{Mehran Shakarami, Claudio De Persis, and Nima Monshizadeh are with the Engineering and Technology Institute, University of Groningen, 9747AG, The Netherlands, {\tt\small m.shakarami@rug.nl, c.de.persis@rug.nl, n.monshizadeh@rug.nl.}}}
\begin{document} 
 \title{{Privacy and Robustness Guarantees in Distributed Dynamics for Aggregative Games}}
%\author{Mehran~Shakarami, University of Groningen}
\date{}
\maketitle

\begin{abstract}
	This paper considers the problem of Nash equilibrium (NE) seeking in aggregative games, where the payoff function of each player depends on an aggregate of all players' actions.  We present a distributed continuous time  algorithm such that  the actions of the players converge to NE by communicating to each other through a connected network. A major concern in communicative schemes among strategic agents is that their private information may be revealed to other agents or to a curious third party who can eavesdrop the communications. We address this concern for the presented algorithm and show that
		%, even in the special case of quadratic payoff functions, 
		private information of the players cannot be reconstructed even if all the communicated variables are compromised. 
		As agents may deviate from their optimal strategies dictated by the NE seeking protocol, we investigate robustness of the 
proposed algorithm against time-varying disturbances. In particular, we provide rigorous robustness guarantees by proving input to state stability (ISS) properties of the NE seeking dynamics.   
		%proved; as a result, if the players deviate from their optimal strategies or if there exist some imperfections in the payoff functions, the dynamics is still stable. 
		Finally, we demonstrate practical applications of our theoretical findings on two case studies; namely, on an energy consumption game and  a charging coordination problem of electric vehicles.
\end{abstract}
\begin{IEEEkeywords}
	Aggregative games, Nash equilibrium seeking, privacy.
\end{IEEEkeywords}
\section{Introduction}
Game theory is the standard tool for studying the interaction behavior of self-interested agents/players  and has attracted considerable attention 
due to its broad applications and technical challenges. An active research topic in this area {concerns} aggregative games that model a set of  noncooperative  agents aiming at minimizing their cost functions, while the action of each individual player is influenced by an aggregation of the actions of all the other players \cite{jensen2010aggregative}. % (ref to best reply, kukushkin and dubely). 
The most notable example of aggregative games is the Cournot  competition in {economics} \cite{mas1995microeconomic}.  {These} games have appeared in a broad range of applications such as networked control systems \cite{de2019feedback}, demand-side management in smart grids \cite{mohsenian2010autonomous}, charging control of plug-in electric vehicles \cite{ma2011decentralized}, and flow control of communication networks \cite{alpcan2005distributed}.

% The most well-known example of such games is the Cournot  competition in Economics where each firm's production  depends on the price that is related to the total product in the market \cite{mas1995microeconomic}. More recently, aggregative games 

Existence of a solution for games, Nash equilibrium (NE), and its uniqueness have been extensively studied in the literature, and various NE computation algorithms have been proposed \cite{basar1999dynamic}. Earlier {works} considered the case where each agent has full access to  the actions of all other agents, i.e., all-to-all interactions  \cite{basar1999dynamic,facchinei2007finite}. However, recent works have attempted  to relax this assumption due to computational and scalability issues. In this regard,  {the authors in \cite{salehisadaghiani2016distributed,ye2017distributed,gadjov2018passivity} presented distributed NE seeking algorithms  where each player computes an estimation of the actions of all the other players by communicating to its neighbors. Although those algorithms are applicable to aggregative games, they are inefficient  as they require that each player estimate the  action of all other players. In aggregative games, on the other hand,  it is sufficient that each player estimates the aggregation term. This has led to various algorithms tailored for aggregative games,}  which can be  classified as  {gather and broadcast}  \cite{de2018continuous,grammatico2017dynamic,paccagnan2018nash} and distributed algorithms  \cite{koshal2016distributed,parise2017distributed,Gadjov2019single,liang2017distributed}. 
{The former is based on the exchange of information with a central aggregator, whereas the latter relies on a peer-to-peer communication.}
%In the former, the players exchange  their information with a central aggregator, {whereas in the latter, similar to general %games,} each player only communicates with its neighboring players. 
This paper falls into the second category and presents a fully distributed NE seeking algorithm for aggregative games.% and investigates robustness as well as privacy. 

{From a different perspective, distributed NE seeking algorithms for aggregative games can be divided into discrete time \cite{koshal2016distributed,lei2018linearly,parise2017distributed,Gadjov2019single} and continuous time  \cite{liang2017distributed,ye2016game}. The discrete time algorithms are based on best response dynamics \cite[Synchronous Alg.]{koshal2016distributed}, gossip technique \cite[Asynchronous Alg.]{koshal2016distributed}, double-layer iterations \cite{lei2018linearly,parise2017distributed}, and forward-backward iteration \cite{Gadjov2019single}. The continuous time algorithms are based on best response dynamics (gradient based algorithms). The asynchronous algorithm in \cite{koshal2016distributed}, needs diminishing step sizes for exact convergence, which typically slows down the convergence speed, and if a fixed step size is used, the solution will only converge to a vicinity of the NE. The other algorithms, on the other hand, employ some tuning parameters shared among all players.  In comparison to those works, we provide fully distributed conditions for implementing the proposed NE seeking algorithm, and more importantly, equip our algorithm with rigorous  {privacy and robustness} guarantees as discussed below.}
	%present an algorithm, that only has private tuning parameters and does not need a diminishing property.}

Generally speaking, NE seeking algorithms rely on communication either with a central aggregator \cite{de2018continuous} or among neighboring agents \cite{koshal2016distributed}.  In the former {approach}, it is often assumed that the aggregator is trustworthy, whereas, in reality, private information can still be leaked by an aggregator either willingly or unwillingly.
In the latter {approach}, private information can be revealed to other players through direct communication, or leaked to curious adversaries as a result of eavesdropping. 
More generally, in order to convince strategic players to participate in any cooperative policy, privacy guarantees need to be put in place.  
%An example is given by optimal allocation of resources in microgrids where strategic agents communicate with each other or with a central aggregator in order to satisfy a global constraint, namely matching supply and demand \cite{de2019feedback,dorfler2017gather}.  

Motivated by the above concerns, we investigate the proposed distributed NE seeking algorithm from the viewpoint of privacy.   To this end, we adopt the notion of privacy recently proposed in  \cite{Monshizadeh2019plausible}. Roughly speaking, privacy is preserved if private variables of the dynamics cannot be reconstructed based on the available information on the structure of the algorithm, the class of payoff functions, measurements, and communicated variables. To make sure this is the case, it will be shown that there are {\em replicas} of private variables that are indistinguishable from the original ones in view of the available information. 
An alternative approach would be to use data perturbation techniques and rely on differential privacy 
\cite{dwork2011differential,dwork2014algorithmic,Cortes2016}.  The idea behind this technique is to add noise with appropriate statistical properties to the process under investigation in order to limit the ability of a curious party in estimating the private quantities of the system.
%While the ISS results established here show that trajectories will remain bounded in the presence of such perturbations, 
However, the added noise will  steer the asymptotic behavior of the algorithm away from the NE of the game .
Our approach, on the contrary, retains the NE of the game while providing privacy guarantees.
%To the best of our knowledge, this is the first work that explicitly addresses privacy in a NE seeking algorithm.

The payoff functions do not capture all practical features of a game, due to  the underlying approximations in obtaining the payoff functions {or unidentified parameters. Therefore, it is crucial that an NE seeking algorithm has suitable robustness properties. More importantly, {robustness} is essential  due to possible deviation of the players actions from a fully rational behavior,  examples of which are ``stubborn players''  \cite{ye2016game,frihauf2011nash} who do not fully obey the NE seeking  dynamics, or ``almost" rational  players whose decisions are determined by their ``bounded rationality'' \cite{pita2010robust}. {Robustness} of an NE seeking algorithm with respect to slowly-varying channel gain  in code division multiple access systems is studied in \cite{fan2006passivity}. {Interested readers are referred to \cite{ito2012disturbance,cherukuri2017role,weitenberg2018exponential} for  studies on robustness of gradient systems,  saddle-point dynamics, and optimal frequency regulation of power networks, respectively.}}

%It is well-know that the payoff functions do not capture all practical features of a game, due to  the underlying approximations in obtaining the payoff functions, unidentified parameters, and most importantly due to possible deviation of the players actions from a fully rational behavior.  
%Examples of the latter are ``stubborn players''  \cite{ye2016game,frihauf2011nash} who do not fully obey the learning dynamics\nmargin{learning?}, or ``almost" rational  players whose decisions are determined by their ``bounded rationality'' \cite{pita2010robust}.
%Therefore, it is crucial that an NE seeking algorithm has suitable robustness properties. 
%In this regard, robustness of an NE seeking algorithm to constant biases is studied in \cite{ye2016game}, 
%...{\cite{ye2016game,frihauf2011nash,fan2006passivity,ito2012disturbance}.}\nmargin{complete the robustness paragraph}

{To cope with the imperfections in the payoff functions, we add  bounded time-varying disturbances to the dynamics of the algorithm}, and {it will be shown} that the proposed distributed NE seeking algorithm is robust against such perturbations. {We  use input-to-state stability} (ISS) as a notion of robustness, which {examines} whether the state trajectories of the system  are bounded by a function of the  perturbation \cite{sontag2008input}.

In summary, the main contribution of the paper is threefold.
First, we provide {a fully distributed algorithm in continuous-time  that steers the players to the NE.} {Second, we provide privacy guarantees for the proposed algorithm. Third, we show robustness of the algorithm {in the sense of ISS} against bounded time-varying disturbances.}

The rest of the paper is organized as follows: Section~\ref{problem} includes preliminaries and the problem formulation. In Section~\ref{alg_analysis}, a distributed NE seeking algorithm is proposed and its {privacy and robustness}   guarantees are established. The algorithm is modified in Section~\ref{projection} to {deal with} the case when the action of each player is constrained to a compact set. {Two case studies of an energy consumption game and charging of electric vehicles are} provided in Section~\ref{simulation}.  The paper closes with conclusions in Section~\ref{conclusion}.

\section{Notations, Preliminaries, and Problem Statement}\label{problem}
\subsection{Notations}
The set of real, positive real, and nonnegative real numbers are denoted by $ \R $, $ \R_{>0} $, and $ \R_{\geq0} $, respectively. We use $ \bze$ to denote a  vector or matrix of all zeros.  The symbol $ \bone_n $ denotes the vector of all ones in $\R^n$, and $ I_n $ denotes the identity matrix of size $n$. We omit the subscript whenever no confusion arises. The Kronecker product is denoted by $ \otimes $. For given vectors $ x_1,\cdots,x_N \in\R^n$,  we use the shorthand notation  $\bx:=\col\big(x_1,\cdots,x_N\big)=\big[x_1^\top,\cdots,x_n^\top\big]^\top $ and  $ \bx_{-i}:=\col\big(x_1,\cdots,x_{i-1},x_{i+1},\cdots,x_N\big) $.
% We denote  the gradient of a differentiable function $ f:\R^{nN}\to \R $, we denote $ \nabla_{x_i}f(\bx)=\fpart{f}{x_i}(\bx)$. 
We  use $ \bm{A}:=\diag\big(A_1,\cdots,A_N\big) $ to denote the block diagonal matrix constructed from the matrices $ A_1,\cdots,A_N $. A continuous function $ \alpha:\R_{\geq 0}\to \R_{\geq 0}$ is class $ \mathcal{K} $ if it is strictly increasing and $ \alpha(0)=0 $. In addition, it is class $ \mathcal{K}_\infty $ if $ \alpha(s)\to \infty $ as $ s\to \infty $. A continuous function $ \beta:\R_{\geq 0}\times \R_{\geq 0}\to \R_{\geq 0} $ belongs to class $ \mathcal{KL} $ if for any fixed $ t $, the mapping $ s\mapsto\beta(s,t) $ belongs to class $ \mathcal{K} $, and for any fixed $ s $, the mapping $ t\mapsto\beta(s,t) $ is decreasing and $ \beta(s,t)\to 0 $ as $ t\to\infty $. A function $ F:\R^n\to \R^n $ is (strictly) monotone if $ (x-y)^\top(F(x)-F(y))\geq 0\,(>0) $ for all $ x\neq y\in \R^n $, and it is $ \mu $-strongly monotone if $ (x-y)^\top(F(x)-F(y))\geq \mu \|x-y\|^2 $ for all $ x, y\in \R^n $ and some $ \mu\in \R_{>0} $.
%\textbf{Gradient with respect to one variable. image and ker of matrix. class k and kl from ashish paper, fundamental theorem of calculus. fundamental theorem of linear algebra, variational inequality}
\subsection{Algebraic Graph Theory}
Let $ G_c=(\cali,\cale) $ be an undirected graph that models the network of $ N $ agents with $ \cali=\{1,\cdots,N\} $ being the node set associated to the agents, and $ \cale$ denoting the edge set. Each element of $ \cale $ is an unordered pair $ \{i,j\} $ with $ i,j\in\cali $. The graph is connected if there is a path between every pair of nodes. The set of neighbors of agent $ i $ is $ \caln_i=\{j\in\cali\mid \{i,j\}\in\cale\} $. The Laplacian matrix of $ G_c $ is denoted by $ L $ with $ L_{ii} $ equal to the cardinality of $ \caln_i $, $ L_{ij}=-1 $ if $ j\in\caln_i $, and $ L_{ij}=0 $ otherwise. The matrix $ L $ of an undirected graph is positive semidefinite and $ \bone_{N}\in\ker(L) $. If the graph is connected, $ L $ has exactly one zero eigenvalue, and  $ \ima  (\bone_{N})=\ker(L) $. {The Moore–Penrose inverse of $ L $ is denoted by $ L^{+ } $.}

%{\color{gray} Let 
%\begin{equation}\label{pi}
%\Pi:=I-{ \frac{1}{N}}\bone_{N}\bone_{N}^\top
%\end{equation}
%then, we have $ \Pi L=L\Pi =L $; in other words, $ \Pi=L^{+}L=LL^{+} $ where $ L^{+ } $ is the Moore–Penrose inverse of $ L $.}
%\nmargin{memo}
\subsection{Projection and Variational Inequality}
Given a closed convex set $ \cals \subseteq \R^n$, the projection of a point $ v\in\R^n $ to $ \cals $ is denoted by $ \proj_\cals (v):=\argmin_{y\in\cals}\|y-v\| $. Given a point $ x\in\cals $, the normal cone of $ \cals $ at $ x $ is the set $ \caln_{\cals}(x):=\big\{y\in\R^n \mid y^\top(z-x)\leq 0,\forall z\in\cals\big\} $. The tangent cone of $ \cals $ at $ x\in\cals $ is denoted by $ \calt_\cals(x):=\cl\left(\cup_{y\in\cals}\cup_{h>0} h(y-x)\right) $ where $ \cl(\cdot) $ denotes the closure of a set. For $ v\in\R^n $ and $ x\in\cals $, the projection of $ v $ at $ x $ with respect to $ \cals $ is given by $ \Pi_{\cals}(x,v):=\lim\limits_{h\to 0^{+}}\frac{1}{h}\left(\proj_\cals(x+hv)-x\right) $, and it is equivalent to the projection of $ v $ to $ \calt_\cals(x) $, i.e., $ \Pi_{\cals}(x,v)=\proj_{\calt_\cals(x)}(v) $. By using Moreau's decomposition theorem, a vector $ v\in \R^n $ can be decomposed as $ v=\proj_{\caln_\cals(x)}(v)+\proj_{\calt_\cals(x)}(v) $ for any point $ x\in \cals $. Given a mapping $ F:\cals\to \R^n $, the variational inequality problem VI$ (\cals,F)$ is to find the point $ \bar x\in \cals $ such that $ (x-\bar x)^\top F(\bar x)\geq 0 $ for all $ x\in \cals $.

%notations and preliminaries on projection, mouree decomposition, monotonicicy, convexity, mabybe iss, etc. (ispired by claudio and pavel)
\subsection{Aggregative Games}
We consider  $ N $ players that can choose their action variables $ x_i $ in the constraint sets $ \calx_i\subseteq \R^n $. In an aggregative game, each player aims at minimizing a payoff function $ J_i:\R^n\times \R^n\to \R $ by choosing the action variable $ x_i $. The value of the payoff function depends on $x_i$ and an aggregation of all the other action variables.  In particular, each  player $i\in \cali$ attempts to solve the following minimization problem 
\begin{equation}\label{cost}
\begin{split}
&\min_{x_i\in\calx_i}J_i(x_i,\avg{\bx})\\
&\avg{\bx}:=\frac{1}{N}\sum_{j\in\cali}h_jx_j=\frac{1}{N}(\bone_N^\top \otimes I_n)\bt\bx,
\end{split}
\end{equation}
where $ \cali:=\{1,\cdots,N\} $ is the set of players, $ h_j$ is a positive scalar indicating the weight of the action $ x_j$ in the aggregation {$ \avg{\bx} $}, $\bt:=\diag(h_1I_n,\cdots,h_NI_n) $,  and $ \bx:=\col(x_1,\cdots,x_N) $. Note that the solution of the above problem depends on the action of other players.  We use the compact notation $ {\cal G}_{\text{agg}}=\big({\cal I} ,(J_i)_{i\in \cal I}, (\calx  _i)_{i\in \cal I}\big) $ to denote the aggregative game in \eqref{cost}. By definition, a point $\bx^*:= \col(x_1^*,\cdots,x_N^*)  $ is a Nash equilibrium (NE) of the game if 
{\begin{equation*}
x_i^*\in \argmin_{y\in\calx_i}J_i(y,\frac{h_i}{N}y+\frac{1}{N}\sum_{j\neq i}h_jx_j^*),\quad \forall \, i\in\cali.
\end{equation*}}
This means that at the NE, there is no player that can decrease its payoff by unilaterally changing its action.  {We note that  $ x_i^* $ depends on the optimal action of all the other players, and therefore several coupled optimization problems need to be solved to obtain $ \bx^* $. 
	%Moreover, we assume that the players do not share their actions $ x_i $ and corresponding weights $ h_i $ with the other players. 
	Consequently, standard distributed optimization techniques cannot be used for solving this problem.} In the next section, we derive local sufficient conditions for existence and uniqueness of NE and present a distributed algorithm that asymptotically converges to this point.
%Throughout the paper, we consider the following assumption.
%\begin{standingassumption}[\cite{basar1999dynamic}]\label{asmpset}
%	For all $ i\in\cali $, the action set $ \calx_i\subseteq\R^n $ is non-empty, convex, and compact, and the cost function $ J_i $ is jointly continuous in all its arguments. 
%\end{standingassumption}
%\nmargin{make assumption and remark and definition environment nonitalic}
\section{Distributed NE Seeking Dynamics}\label{alg_analysis}
%This section presents sufficient distributed conditions for the NE of the aggregative game to exist and be unique. Then we assume that the players do not exchange their action values, but can communicate with each other in a network. In this framework, we propose a distributed NE seeking algorithm that converges to the NE. \\
%In this section, we consider the following assumption.
First, we discuss some auxiliary results that are instrumental to prove convergence properties of the NE seeking algorithm proposed later in the section.

\begin{assumption}\label{asmp0}
	For all $ i\in\cali $, the action set is $ \calx_i=\R^n $, and the cost function $ J_i $ is $ \mathcal{C}^2 $ in all its arguments. 
\end{assumption}
{This assumption is similar to {\cite[Asm. 2(i)]{gadjov2018passivity}}, and we will relax} it in {Section \ref{projection}} to any compact and convex subset $\calx_i\subset   \R^n $. However,  $ \calx_i=\R^n $ is considered in this section for clarity of the presentation.

Let $ \sigma_i\in\R^n $ be a local variable associated to each player $i\in \cali$, with the payoff function written as $ J_i(x_i,\sigma_i)$, and define 
%us rewrite the payoff function as $ J_i(x_i,\sigma_i) $. As a result, if we define\nmargin{change $\nabla$}
\begin{equation}\label{fi}
f_i(x_i,\sigma_i):=\fpart{}{x_i}J_i(x_i,\sigma_i)+\frac{h_i}{N} \fpart{}{\sigma_i}J_i(x_i,\sigma_i).
\end{equation}
It is easy to see that %\nmargin{change $\nabla$}
\[
 \fpart{}{x_i}J_i(x_i,\avg{\bx})=f_i(x_i,\avg{\bx}). 
 \]
%For ease of notation, the above definition is employed  
 %throughout the paper, and we consider the following assumption as well.
To proceed further, we need the following assumption:

\begin{assumption}\label{asmp1}
	For all $ i\in\cali $, $ x_i\in\calx_i $, and $ \sigma_i\in\R^n $, the  mapping $ x_i\mapsto f_i(x_i,\sigma_i)$ is $ \mu_i $-strongly monotone, and the mapping  $ \sigma_i\mapsto f_i(x_i,\sigma_i)$ is $ \ell_i $-Lipschitz continuous with {$ \mu_i>\ell_i h_i$}.
\end{assumption}

{The assumption above is a decentralized version of \cite[Asm. 1]{de2018continuous} that can be checked locally.} The conditions of  Assumption \ref{asmp1} can be replaced by less conservative, yet more implicit, conditions; see Remark \ref{r:relax}.    
%\todoing{Nima: One or  more sentence to be added here about the justification of the assumption.}
 %{ The condition $\mu_i>\ell_i t_i$ will be needed later to establish the convergence results in Theorem \ref{th:conv}. However, we state it earlier here, as it also gives  sufficient conditions for the existence and uniqueness of the NE.}

%is that each player can check its conditions locally, without sharing any information to the others. We will also discuss later about relaxation of this assumption to more general payoff functions.
%Before proceeding with the analysis and showing that the postulated assumptions guarantee existence of the NE, we present the following useful proposition.

%\nmargin{drop $i\in \cali$ in indices?}
In game theory, it is well-known that the pesudo-gradient mapping defined as $ \col\big((f_i(x_i,\avg{\bx}))_{i\in\cali}\big) $ plays a fundamental role in designing NE seeking algorithms. Motivated  by this and the fact that the players may not have access to $ \avg{\bx} $,  we introduce the following mapping:
\begin{equation}\label{F}
F(\bx, \bsg):=\begin{bmatrix}
\bk\,\col\big((f_i(x_i,\sigma_i))_{i\in\cali}\big)\\
 \bsg-\bt\bx
\end{bmatrix}
\end{equation}
where $ \bk:=\diag(k_1I_n,\cdots,k_NI_n) $ with design parameters $ k_i>0 $,  and $  \bsg:=\col({\sigma}_1,\cdots,{\sigma}_N) $. The following lemma captures some properties of \eqref{F}.
\begin{lemma}\label{pro1}
	Let Assumption~\ref{asmp1} hold and choose $ k_{i} $ such that
	\begin{equation}\label{inter}
	k_{i}\in\big(\frac{(\sqrt{\mu_i}-\sqrt{\mu_i-\ell_ih_i})^{2}}{\ell_i^2},\ \frac{(\sqrt{\mu_i}+\sqrt{\mu_i-\ell_ih_i})^{2}}{\ell_i^2}\big)
	\end{equation}
	is satisfied for each $i\in \cali$.
	Then, for all $ x_i\in\calx_i $ and $ \sigma_i\in\R^n $, 
\begin{enumerate}[(i)]
		\item the map $ F $ in \eqref{F} is $ \epsilon $-strongly monotone.
		\item the map $ \bk\col\big((f_i(x_i,\avg{\bx}))_{i\in\cali}\big) $ is $ \epsilon $-strongly monotone.
\end{enumerate}
\end{lemma}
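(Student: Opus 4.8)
The plan is to establish (i) by a direct computation that decouples across players into $2\times2$ quadratic forms, and then to deduce (ii) from (i) by a judicious choice of the local variables $\sigma_i$ rather than repeating the argument.

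For part (i) I would fix $(\bx,\bsg)$ and $(\bx',\bsg')$, write $\xi_i:=x_i-x_i'$ and $\zeta_i:=\sigma_i-\sigma_i'$, and expand the defining inner product $(\bx-\bx',\bsg-\bsg')^\top\big(F(\bx,\bsg)-F(\bx',\bsg')\big)$. Because $\bk$ and $\bt$ are block diagonal and each $f_i$ depends only on its own pair $(x_i,\sigma_i)$, the contribution of the first block of $F$ splits as $\sum_i k_i\,\xi_i^\top\big(f_i(x_i,\sigma_i)-f_i(x_i',\sigma_i')\big)$, while the second block contributes $\sum_i\big(\|\zeta_i\|^2-h_i\,\zeta_i^\top\xi_i\big)$. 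Splitting $f_i(x_i,\sigma_i)-f_i(x_i',\sigma_i')$ into a pure $x_i$-increment and a pure $\sigma_i$-increment, I would use the $\mu_i$-strong monotonicity of $x_i\mapsto f_i(x_i,\sigma_i)$ on the first and the $\ell_i$-Lipschitz continuity of $\sigma_i\mapsto f_i(x_i,\sigma_i)$ together with Cauchy–Schwarz on the second, obtaining the lower bound $k_i\mu_i\|\xi_i\|^2-k_i\ell_i\|\xi_i\|\|\zeta_i\|$. Bounding $-h_i\zeta_i^\top\xi_i\ge -h_i\|\xi_i\|\|\zeta_i\|$ as well, the whole inner product is bounded below by $\sum_i w_i^\top M_i w_i$, where $w_i:=(\|\xi_i\|,\|\zeta_i\|)^\top$ and $M_i:=\begin{bmatrix}k_i\mu_i & -\tfrac12(k_i\ell_i+h_i)\\[2pt] -\tfrac12(k_i\ell_i+h_i) & 1\end{bmatrix}$.

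Since $k_i\mu_i>0$, each $M_i$ is positive definite iff $\det M_i=k_i\mu_i-\tfrac14(k_i\ell_i+h_i)^2>0$, i.e. iff $\ell_i^2 k_i^2+(2\ell_ih_i-4\mu_i)k_i+h_i^2<0$. The roots of this quadratic in $k_i$ are exactly the endpoints of the interval \eqref{inter}: its discriminant equals $16\mu_i(\mu_i-\ell_ih_i)$, which is positive precisely because Assumption~\ref{asmp1} enforces $\mu_i>\ell_ih_i$, so both roots are real and positive and the interval is nonempty. Hence for $k_i$ in \eqref{inter} every $M_i\succ0$, and setting $\epsilon:=\min_i\lambda_{\min}(M_i)>0$ gives the inner product $\ge\epsilon\sum_i(\|\xi_i\|^2+\|\zeta_i\|^2)=\epsilon(\|\bx-\bx'\|^2+\|\bsg-\bsg'\|^2)$, which is $\epsilon$-strong monotonicity of $F$.

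For part (ii) I would specialize part (i) to the consistent points $\bsg=\bone_N\otimes s(\bx)$ and $\bsg'=\bone_N\otimes s(\bx')$. Then the first block of $F$ becomes exactly $\bk\col\big((f_i(x_i,s(\bx)))_{i\in\cali}\big)$, and $\bsg-\bsg'=\bone_N\otimes\eta$ with $\eta:=s(\bx)-s(\bx')=\tfrac1N\sum_j h_j\xi_j$. The key observation is that the second-block contribution now vanishes: since $\sum_j h_j\xi_j=N\eta$, one has $(\bsg-\bsg')^\top(\bsg-\bsg')=N\|\eta\|^2=(\bsg-\bsg')^\top\bt(\bx-\bx')$, so $(\bsg-\bsg')^\top\big((\bsg-\bsg')-\bt(\bx-\bx')\big)=0$. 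Therefore the $\epsilon$-strong monotonicity of part (i) collapses to $(\bx-\bx')^\top\bk\big(\col((f_i(x_i,s(\bx)))_i)-\col((f_i(x_i',s(\bx')))_i)\big)\ge\epsilon(\|\bx-\bx'\|^2+N\|\eta\|^2)\ge\epsilon\|\bx-\bx'\|^2$, which is exactly (ii). The main obstacle is the bookkeeping in part (i) — isolating the mixed increments so the bounds assemble into the matrices $M_i$ and checking that the determinant condition reproduces \eqref{inter} rather than a coarser one; part (ii) is then the short but non-obvious step of noticing that the second block of $F$ is built so that its monotonicity contribution cancels along consistent aggregates.
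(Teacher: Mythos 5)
Your proposal is correct and follows essentially the same route as the paper: the same splitting of the $f_i$-increments, the same per-player $2\times2$ matrices $M_i$ whose positive definiteness is equivalent to \eqref{inter}, and the same specialization of (i) to consistent aggregates for (ii). The only cosmetic difference is that you verify the cancellation of the second-block term by direct computation ($(\bsg-\bsg')^\top\bt(\bx-\bx')=N\|\eta\|^2$), whereas the paper expresses it via the projector $\Pi=I-\tfrac{1}{N}\bone_N\bone_N^\top$ and $\bone_N^\top\Pi=0$.
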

\begin{proof}
	See Appendix A.
\end{proof}

\begin{remark}
	Setting $ k_i=1 $, for each $i$, returns a more restrictive condition than the one in Assumption~\ref{asmp1}, namely $ \sqrt{\mu_i}>2(\ell_i+h_i) $. Therefore, introducing the gain $k_i$ yields a milder assumption and, as we will see later, contributes to the privacy of the proposed algorithm.
\end{remark}

We note that the results of the preceding lemma is sufficient for the existence and uniqueness of the NE.
This is formally stated next.
%\nmargin{I changed lemmas to prop. and vice versa. check the labels}
\begin{lemma}\label{lemNE}
	Let Assumptions~\ref{asmp0} and \ref{asmp1}  hold. %, and choose $k_i$ according to \eqref{inter} for each $i\in \cali$. 
	%consider \eqref{fi}, 	
	 Then the aggregative game $ {\cal G}_{\text{agg}}=\big({\cal I} ,(J_i)_{i\in \cal I}, (\calx  _i)_{i\in \cal I}\big) $ has a unique NE $ \bx^* $  which satisfies
	\begin{equation}\label{ne}
	\col\big((f_i(x_i^*,\avg{\bx^*}))_{i\in\cali}\big)= \bze
	\end{equation}
with $f_i(\cdot)$ given by \eqref{fi}.
\end{lemma}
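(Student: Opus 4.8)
The plan is to characterize the NE through the stationarity of each player's best-response problem, argue via convexity that this stationarity condition is both necessary and sufficient, and then invoke the strong monotonicity already established in Lemma~\ref{pro1}(ii) to conclude existence and uniqueness. First I would fix a candidate $\bx^*$ and, for each $i\in\cali$, write player $i$'s problem as the unconstrained minimization over $y\in\R^n$ of $g_i(y):=J_i(y,\frac{h_i}{N}y+\frac1N\sum_{j\neq i}h_jx_j^*)$. Since $\calx_i=\R^n$ by Assumption~\ref{asmp0} and $J_i$ is $\mathcal C^2$, the chain rule gives $g_i'(y)=f_i(y,\frac{h_i}{N}y+\frac1N\sum_{j\neq i}h_jx_j^*)$ with $f_i$ as in \eqref{fi}; evaluated at $y=x_i^*$ the second argument collapses to $\avg{\bx^*}$, so that $g_i'(x_i^*)=f_i(x_i^*,\avg{\bx^*})$, matching the left-hand side of \eqref{ne}.

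Next I would show that each $g_i$ is strongly convex, i.e.\ that $y\mapsto g_i'(y)$ is strongly monotone. The key manipulation is to add and subtract $f_i(y_1,\frac{h_i}{N}y_2+c_i)$ in the inner product $(y_1-y_2)^\top\big(g_i'(y_1)-g_i'(y_2)\big)$, where $c_i=\frac1N\sum_{j\neq i}h_jx_j^*$: the $\mu_i$-strong monotonicity of $f_i$ in its first argument contributes $\mu_i\|y_1-y_2\|^2$, while the $\ell_i$-Lipschitz continuity in the second argument, combined with the fact that that argument varies by $\frac{h_i}{N}(y_1-y_2)$, contributes at least $-\frac{\ell_i h_i}{N}\|y_1-y_2\|^2$. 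Hence $g_i'$ is $(\mu_i-\frac{\ell_i h_i}{N})$-strongly monotone, a positive modulus because $\mu_i>\ell_i h_i\ge \frac{\ell_i h_i}{N}$. Strong convexity of $g_i$ makes the first-order condition $g_i'(x_i^*)=0$ both necessary and sufficient for $x_i^*$ to be player $i$'s unique best response, and stacking these conditions over $i$ shows that $\bx^*$ is a NE if and only if $\col\big((f_i(x_i^*,\avg{\bx^*}))_{i\in\cali}\big)=\bze$, which is precisely \eqref{ne}.

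It then remains to establish that \eqref{ne} has exactly one solution. By Lemma~\ref{pro1}(ii), the pseudo-gradient map $\Phi(\bx):=\bk\,\col\big((f_i(x_i,\avg{\bx}))_{i\in\cali}\big)$ is $\epsilon$-strongly monotone and continuous on $\R^{Nn}$; a continuous strongly monotone map on $\R^{Nn}$ is coercive and thus a homeomorphism onto $\R^{Nn}$ (Minty--Browder), so it has a unique zero, equivalently VI$(\R^{Nn},\Phi)$ has a unique solution. Because $\bk$ is positive definite and block diagonal, $\Phi(\bx)=\bze$ if and only if $\col\big((f_i(x_i,\avg{\bx}))_{i\in\cali}\big)=\bze$, so \eqref{ne} admits a unique solution $\bx^*$, which by the previous paragraph is the unique NE.

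The only delicate step is the second one: turning Assumption~\ref{asmp1}, which constrains $f_i$ separately in each of its two arguments, into convexity of the per-player objective $g_i$, the subtlety being that the local aggregate $\sigma_i$ itself depends on $y$ through the $\frac{h_i}{N}y$ term. The condition $\mu_i>\ell_i h_i$ is exactly what guarantees the resulting monotonicity modulus stays positive. The structural heavy lifting—strong monotonicity of the full pseudo-gradient—is already carried by Lemma~\ref{pro1}(ii), so once the per-player convexity is in place the remainder is a standard strongly-monotone-VI argument.
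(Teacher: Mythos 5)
Your proof is correct, and its core computations coincide with the paper's: both establish per-player strong convexity of the best-response objective by the same add-and-subtract manipulation (yielding the same modulus $\mu_i-\frac{\ell_i h_i}{N}>0$), and both ultimately rest on the strong monotonicity from Lemma~\ref{pro1}(ii). Where you genuinely diverge is the existence argument and the logical packaging. The paper first deduces from strong convexity and continuity that each $J_i$ is radially unbounded in $x_i$ (citing an auxiliary result) and then invokes the game-theoretic existence theorem from \cite{basar1999dynamic} to produce an NE satisfying \eqref{ne}; uniqueness is handled afterwards by a contradiction argument applied to two candidate equilibria via Lemma~\ref{pro1}(ii). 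You instead make the equivalence ``$\bx^*$ is an NE if and only if \eqref{ne} holds'' explicit, and then solve \eqref{ne} directly: the continuous, $\epsilon$-strongly monotone pseudo-gradient $\bk\,\col\big((f_i(x_i,\avg{\bx}))_{i\in\cali}\big)$ is a bijection of $\R^{nN}$ onto itself (Minty--Browder, or equivalently the strongly monotone VI theorem over $\calx=\R^{nN}$), hence has exactly one zero, and invertibility of $\bk$ removes the gains from the conclusion. Your route buys self-containedness---existence and uniqueness follow from a single analytic fact about strongly monotone maps rather than from two cited game-theoretic results---and it makes the necessity \emph{and} sufficiency of the stationarity condition transparent, which the paper leaves implicit in its citation; the paper's route, by contrast, generalizes more readily to constrained action sets (as in Lemma~\ref{lemNEset}), where stationarity alone no longer characterizes best responses. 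One caveat, shared by and handled identically in both arguments: Lemma~\ref{pro1}(ii) presupposes gains $k_i$ chosen in the interval \eqref{inter}, so one must note that Assumption~\ref{asmp1} ($\mu_i>\ell_i h_i$) makes this interval nonempty; this auxiliary choice is harmless since the zeros of $\col\big((f_i(x_i,\avg{\bx}))_{i\in\cali}\big)$ do not depend on $\bk$.
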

\begin{proof}
See Appendix A.
\end{proof}
%\begin{remark}
%	If we assume that $ \mu:=\mu_i $ and $ \ell:=\ell_i $, then, \eqref{cond} becomes $ \mu>\ell $ which is similar to \cite{de2018continuous}. In addition, a sufficient condition for \eqref{cond} is $ \mu_i>\bar\ell $ where $ \bar\ell:=\max\{\ell_i\} $.
%\end{remark}

\begin{remark}\label{r:relax}
	From the presented analysis, one can see that Assumption~\ref{asmp1} can be relaxed  to any payoff function {$ J_i(x_i,\avg{\bx})$} that is strictly convex and radially unbounded  in $ x_i $ for all $ \bx_{-i}\in \calx_{-i} $ {\cite[Asm. 2(i)]{gadjov2018passivity}}, and results in  strong monotonicity of the mapping $ \col(k_i f_i(x_i,\sigma_i),\sigma_i-h_i x_i) $ for some  $ k_i>0 $. 
	%Furthermore, the strong monotonicity property can be relaxed either by imposing assumptions on the algebraic connectivity of the graph, similar to \cite{gadjov2018passivity}, or using averaging integral dynamics proposed in  \cite{de2018distributed}.
\end{remark}
%\nmargin{remark should be modified}

%{\color{gray}As a consequence of Lemma~\ref{lemNE}, the sufficient conditions for existence and uniqueness of the NE can be checked locally, and the players do not need to share their private information. In addition, the design variable $ k_i $ is calculated based on local information which allows us to present a fully distributed NE seeking algorithm.} 

%Since the players do not share their action variables, neither to the other players nor to a central unite to calculate $ \avg{\bx} $, we assume that they communicate different variables than $ x_i $ to each other by a connected undirected  graph $ G_c $. Therefore, the following dynamics is proposed for reaching to the NE

For privacy reasons, we assume that the players do not communicate their action variables $x_i$, neither to the other players nor to a central unit.  Instead, auxiliary variables  will be communicated through a connected communication graph $ G_c $.  This motivates the following distributed NE seeking policy:

\begin{equation*}
%\forall\, i\in\cali \left\{ 
\begin{aligned}
\dot{x}_i&=-k_if_i(x_i,\sigma_i)\\
\dot{{\sigma}}_i&=-{\sigma}_i+h_ix_i-\sum_{j\in\caln_i}(\psi_i-\psi_j)\\
\dot{\psi}_i&=\sum_{j\in\caln_i}({\sigma}_i-{\sigma}_j),
\end{aligned}
%\right.
\end{equation*}
%\begin{equation*}
%\forall\, i\in\cali \left\{ \begin{aligned}
%\dot{x}_i&=-k_if_i(x_i,\sigma_i)\\
% \dot{{\sigma}}_i&=-{\sigma}_i+t_ix_i-\sum_{j\in\caln_i}(\psi_i-\psi_j)\\
%\dot{\psi}_i&=\sum_{j\in\caln_i}({\sigma}_i-{\sigma}_j)
%\end{aligned}\right.
%\end{equation*}
for each $i\in \cali$,  where  $ \caln_i $ denotes the set of neighbors of node $ i $. Notice that the players only use the local parameters $ k_i $ and $ h_i $, and communicate the variables $ {\sigma}_i $ and $\psi_i$. The variable $\sigma_i$ is, in fact, a local estimation of $ \avg{\bx} $, {and the state components $ {\psi}_i $, $i\in \cali$,  are defined to enforce consensus on $ \sigma_i$ variables}. 
Let $ \bpsi:=\col(\psi_1,\cdots,\psi_N) $ and $ L $ be the Laplacian matrix of the graph $ G_c $. 
Then, the algorithm can be written in vector form as
\begin{equation}\label{dist}
\begin{split}
\dot{\bx}&=-\bk\col\big((f_i(x_i,\sigma_i))_{i\in\cali}\big)\\
 \dot{\bsg}&=- \bsg+\bt\bx-(L\otimes I_n)\bpsi\\
\dot{\bpsi}&=(L\otimes I_n)\bsg.
\end{split}
\end{equation}
{For clarity, we note that the standing assumption in the remainder of the section is:
	Assumptions~\ref{asmp0} and \ref{asmp1} hold,  and $ k_i $ is selected according to \eqref{inter}, for each $i\in \cali$. }

First, we characterize the equilibria of \eqref{dist} and then proceed with the results concerning convergence, privacy, and robustness. 
\begin{proposition}\label{prop:eqil}
	Let $ \bx^* $ be the NE of the game $ {\cal G}_{\text{agg}} $. Then, any equilibrium point of \eqref{dist} is given by $(\bar\bx,\bar\bsg,\bar\bpsi)= (\bx^*,\bone_N \otimes \avg{\bx^*}, \bar\bpsi) $ where $ \bar \bpsi \in  \varPsi$ with 
	\begin{equation}\label{psi_set}
	\varPsi:=\left\{\bar\bpsi\in\R^{nN}\mid \bar\bpsi= (L^{+}\otimes I_n){\bt}\bx^*+\bone_{N}\otimes\zeta,\, \zeta\in\R^n\right\},
	\end{equation}
	{and $ L^{+ } $ is the Moore–Penrose inverse of $ L $.}
\end{proposition}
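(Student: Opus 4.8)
The plan is to compute the equilibrium set of \eqref{dist} directly by setting the three right-hand sides to zero and peeling off the components one at a time, and then to identify $\bar\bx$ with the NE via the uniqueness already established in the preceding lemmas.

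First I would use the third equation: at equilibrium $(L\otimes I_n)\bar\bsg=\bze$, and since $G_c$ is connected we have $\ker(L\otimes I_n)=\ima(\bone_N\otimes I_n)$, so $\bar\bsg=\bone_N\otimes\zeta$ for some $\zeta\in\R^n$; that is, the $\bar\sigma_i$ reach consensus on a common value. Next, because $\bk$ is block diagonal with strictly positive gains and hence invertible, the first equation forces $\col\big((f_i(\bar x_i,\bar\sigma_i))_{i\in\cali}\big)=\bze$. To pin down the common value $\zeta$, I would left-multiply the second equilibrium equation $\bar\bsg=\bt\bar\bx-(L\otimes I_n)\bar\bpsi$ by $(\bone_N^\top\otimes I_n)$; since $\bone_N^\top L=\bze^\top$ the $\bar\bpsi$ term drops out, and using $(\bone_N^\top\otimes I_n)\bt\bar\bx=\sum_i h_i\bar x_i=N\avg{\bar\bx}$ together with $(\bone_N^\top\otimes I_n)(\bone_N\otimes\zeta)=N\zeta$ I obtain $\zeta=\avg{\bar\bx}$, i.e. $\bar\bsg=\bone_N\otimes\avg{\bar\bx}$.

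Substituting $\bar\sigma_i=\avg{\bar\bx}$ into $\col\big((f_i(\bar x_i,\bar\sigma_i))_{i\in\cali}\big)=\bze$ gives $\col\big((f_i(\bar x_i,\avg{\bar\bx}))_{i\in\cali}\big)=\bze$. The map $\bk\,\col\big((f_i(x_i,\avg{\bx}))_{i\in\cali}\big)$ is $\epsilon$-strongly monotone by Lemma~\ref{pro1}(ii) and therefore has a unique zero; since $\bk$ is invertible this zero coincides with the unique solution of \eqref{ne}, which by Lemma~\ref{lemNE} is the NE $\bx^*$. Hence $\bar\bx=\bx^*$ and $\bar\bsg=\bone_N\otimes\avg{\bx^*}$.

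It remains to characterize $\bar\bpsi$, which I expect to be the only delicate step. The second equilibrium equation reads $(L\otimes I_n)\bar\bpsi=\bt\bx^*-\bone_N\otimes\avg{\bx^*}$. I would first verify solvability: the right-hand side is orthogonal to $\ker(L\otimes I_n)=\ima(\bone_N\otimes I_n)$ because $(\bone_N^\top\otimes I_n)(\bt\bx^*-\bone_N\otimes\avg{\bx^*})=N\avg{\bx^*}-N\avg{\bx^*}=\bze$, so a solution exists. Using $(L\otimes I_n)^{+}=L^{+}\otimes I_n$, the general solution is $\bar\bpsi=(L^{+}\otimes I_n)(\bt\bx^*-\bone_N\otimes\avg{\bx^*})+\bone_N\otimes\zeta'$ with $\zeta'\in\R^n$ free, the last term spanning $\ker(L\otimes I_n)$. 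Finally, since $\bone_N\in\ker(L)$ and the range of $L^{+}$ is $\ima(L)$, we have $L^{+}\bone_N=\bze$, so $(L^{+}\otimes I_n)(\bone_N\otimes\avg{\bx^*})=(L^{+}\bone_N)\otimes\avg{\bx^*}=\bze$, which collapses the expression to $\bar\bpsi=(L^{+}\otimes I_n)\bt\bx^*+\bone_N\otimes\zeta'$, exactly the set $\varPsi$ in \eqref{psi_set}. The main obstacle is the bookkeeping around the Moore--Penrose inverse---checking the range condition, identifying the kernel direction as the free parameter, and simplifying $L^{+}\bone_N=\bze$---rather than any deep difficulty.
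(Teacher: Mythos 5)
Your proposal is correct and follows essentially the same route as the paper's proof: consensus of $\bar\bsg$ from the third equation, pinning $\zeta=\avg{\bar\bx}$ by left-multiplying the second equation with $(\bone_N^\top\otimes I_n)$, invoking Lemma~\ref{lemNE} (with $\bk>0$) to get $\bar\bx=\bx^*$, and solving $(L\otimes I_n)\bar\bpsi=(\Pi\otimes I_n)\bt\bx^*$ via the Moore--Penrose inverse. The only cosmetic difference is that the paper concludes the $\bar\bpsi$ characterization through the projector identity $\Pi=LL^+=L^+L$, whereas you write the general solution with $(L\otimes I_n)^{+}=L^{+}\otimes I_n$ and simplify using $L^{+}\bone_N=\bze$ --- the same argument in slightly different bookkeeping.
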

\begin{proof}
	At any equilibrium point $ (\bar\bx,\bar\bsg,\bar\bpsi) $, we have
\begin{align}
	\bze&=-\bk\col\big((f_i(\bar x_i,\bar \sigma_i))_{i\in\cali}\big)\label{eqil_x}\\
	\bze&=-\bar \bsg+\bt\bar \bx-(L\otimes I_n)\bar \bpsi\label{eqil_sig}\\
	\bze&=(L\otimes I_n)\bar \bsg.\label{eqil_zet}
\end{align}
As the graph is connected, from \eqref{eqil_zet}, we have $ \bar\bsg=\bone_N\otimes \gamma $ for some $ \gamma\in\R^n $. Therefore, \eqref{eqil_sig} becomes
\begin{equation*}
\bze=-\bone_N\otimes \gamma+\bt\bar \bx-(L\otimes I_n)\bar \bpsi.
\end{equation*}
Left-multiplying both sides of the above equality by $ (\bone_N^\top \otimes I_n) $ gives $ \gamma=\frac{1}{N}(\bone_N^\top \otimes I_n)\bt\bar \bx=\avg{\bar \bx} $. This means that $ \bar\bsg=\bone_N\otimes \avg{\bar \bx} $ and in turn, $ \bar\sigma_i=\avg{\bar \bx} $. Now, \eqref{eqil_x} becomes
\begin{equation*}
\bze=-\bk\col\big(f_i(\bar x_i,\avg{\bar \bx})_{i\in\cali}\big).
\end{equation*}
Consequently, by using Lemma~\ref{lemNE} and $ \bk> 0$, $ \bar\bx $ is the NE of the game, i.e., $ \bar\bx=\bx^* $ and $ \bar\bsg=\bone_N\otimes \avg{ \bx^*} $. In addition,  by substituting the obtained values and using \eqref{pi}, equality \eqref{eqil_sig} yields
\begin{equation*}
(L\otimes I_n)\bar\bpsi= {\bt} \bx^*-\bone_N \otimes \avg{\bx^*}=(\Pi\otimes I_n)\bt\bx^*.
\end{equation*}
Noting that $\Pi=LL^+=L^+L$, we conclude that $ \bar\bpsi $ belongs to the set $ \varPsi $ given by \eqref{psi_set}.
\end{proof}
Proposition~\ref{prop:eqil}, shows that equilibria of \eqref{dist} are crafted as desired, namely  $\bar \bx$ and $\bar \bsg$ return the NE of the game, and the aggregativve value $s(\bx^*)$, respectively.  The next theorem establishes convergence of the solutions of \eqref{dist} to such an equilibrium.

%As the equilibrium points of \eqref{dist} result in the NE, it is required to show that the algorithm is stable and converges to them, which is the goal of the following theorem .
\begin{theorem}\label{th:conv}
	Consider the NE seeking algorithm \eqref{dist} with  initial condition $ ({\bx}(0),{\bsg}(0),\bpsi(0) )\in \R^{nN}\times \R^{nN}\times \R^{nN}$. Then, the solution $ ({\bx},{\bsg},\bpsi) $ converges to the equilibrium point $ (\bar\bx,\bar\bsg,\bar\bpsi)=(\bx^*,\bone_N \otimes \avg{\bx^*}, \bpsi^*) $ where $ \bx^* $  is the unique NE of the aggregative game $ \mathcal{G}_{\text{agg}}$ and 
{
$\bpsi^*\in \Psi$ is given by $\bpsi^*=(L^{+}\otimes I_n) \bt\bx^*+\frac{1}{N} (\bone_N\bone_N^\top \otimes I_n) \bpsi(0)$.
}
	%$ \bpsi^*\in \varPsi $ defined by \eqref{psi_set}.
\end{theorem}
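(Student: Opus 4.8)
The plan is to rewrite \eqref{dist} in a compact monotone-plus-skew form and then combine a quadratic Lyapunov function with an invariance argument. Using the definition \eqref{F} of $F$, the first two blocks of \eqref{dist} read $\tfrac{d}{dt}\col(\bx,\bsg)=-F(\bx,\bsg)-B\bpsi$ and the third reads $\dot\bpsi=B^\top\col(\bx,\bsg)$, where $B:=\col(\bze,\,L\otimes I_n)$, so that $B\bpsi=\col(\bze,(L\otimes I_n)\bpsi)$ and $B^\top\col(\bx,\bsg)=(L\otimes I_n)\bsg$. First I would record the conservation law
\[
\tfrac{d}{dt}(\bone_N^\top\otimes I_n)\bpsi=\big((\bone_N^\top L)\otimes I_n\big)\bsg=\bze,
\]
which holds because $L\bone_N=\bze$; hence $(\bone_N^\top\otimes I_n)\bpsi(t)\equiv(\bone_N^\top\otimes I_n)\bpsi(0)$ along every solution. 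A direct computation (using $L^+\bone_N=\bze$ and $\tfrac1N(\bone_N\bone_N^\top\otimes I_n)=\bone_N\otimes\tfrac1N(\bone_N^\top\otimes I_n)$) shows that the $\bpsi^*$ of the statement is exactly the element of $\varPsi$ meeting this constraint, i.e. $(\bone_N^\top\otimes I_n)\bpsi^*=(\bone_N^\top\otimes I_n)\bpsi(0)$; together with Proposition~\ref{prop:eqil} this singles out the target equilibrium $(\bx^*,\bar\bsg,\bpsi^*)$ with $\bar\bsg:=\bone_N\otimes\avg{\bx^*}$.

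Next I would take the radially unbounded Lyapunov candidate
\[
V:=\tfrac12\big\|\col(\bx,\bsg)-\col(\bx^*,\bar\bsg)\big\|^2+\tfrac12\|\bpsi-\bpsi^*\|^2 .
\]
Differentiating along solutions and using the equilibrium identities $F(\bx^*,\bar\bsg)=-B\bpsi^*$ and $B^\top\col(\bx^*,\bar\bsg)=(L\otimes I_n)\bar\bsg=\bze$ (the latter since $L\bone_N=\bze$), the terms coupling $\bpsi$ and $\col(\bx,\bsg)$ cancel because the interconnection through $B,\,B^\top$ is skew-symmetric. This leaves
\[
\dot V=-\big(\col(\bx,\bsg)-\col(\bx^*,\bar\bsg)\big)^\top\big(F(\bx,\bsg)-F(\bx^*,\bar\bsg)\big)\le-\epsilon\big\|\col(\bx,\bsg)-\col(\bx^*,\bar\bsg)\big\|^2\le 0,
\]
where the $\epsilon$-strong monotonicity of $F$ is provided by Lemma~\ref{pro1}(i). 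Since $V$ is radially unbounded and nonincreasing, sublevel sets are compact and invariant, so every solution is bounded and LaSalle's invariance principle applies.

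Finally I would identify the limit set. The set $\{\dot V=0\}$ equals $\{\bx=\bx^*,\ \bsg=\bar\bsg\}$. On any trajectory contained in the largest invariant set, $\dot\bsg\equiv\bze$ forces $(L\otimes I_n)(\bpsi-\bpsi^*)=\bze$, while $\dot\bpsi=(L\otimes I_n)\bar\bsg=\bze$ makes $\bpsi$ constant; thus $\bpsi-\bpsi^*\in\ker(L\otimes I_n)=\ima(\bone_N\otimes I_n)$. Combining this with the conservation law $(\bone_N^\top\otimes I_n)(\bpsi-\bpsi^*)=\bze$ yields $\bpsi=\bpsi^*$, so the largest invariant set is the single point $(\bx^*,\bar\bsg,\bpsi^*)$, and convergence follows. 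The main obstacle I anticipate is precisely this last step: the strong-monotonicity estimate only controls $\col(\bx,\bsg)$ and gives no direct decay for $\bpsi$, so convergence of $\bpsi$ to the \emph{correct} point among the continuum $\varPsi$ of equilibria must be extracted from the invariance/conservation structure, and one must verify carefully that the initial-condition-dependent conserved quantity $(\bone_N^\top\otimes I_n)\bpsi(0)$ is exactly what selects $\bpsi^*$.
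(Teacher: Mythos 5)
Your proposal is correct and follows essentially the same route as the paper's proof: a quadratic Lyapunov function whose derivative is bounded by $-\epsilon\|\col(\tilde\bx,\tilde\bsg)\|^2$ via Lemma~\ref{pro1}(i) after the skew-symmetric coupling terms cancel, followed by LaSalle's invariance principle and the conserved quantity $(\bone_N^\top\otimes I_n)\bpsi(t)$ to single out $\bpsi^*$ among the continuum $\varPsi$. The only cosmetic difference is that you center the Lyapunov function at $\bpsi^*$ (identified up front from the conservation law), whereas the paper centers it at an arbitrary equilibrium $\bar\bpsi\in\varPsi$ and pins down $\bpsi^*$ at the end.
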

\begin{proof}
	%By using Proposition~\ref{prop:eqil}, we need to show that the solution of \eqref{dist} converges to an equilibrium point. 
	
	Let $ \tilde{\bx}=\bx-\bar{\bx} $, $ \tilde{\bsg}=\bsg-\bar{\bsg} $, and $ \tilde{\bpsi}=\bpsi-\bar{\bpsi} $, 
	where $(\bar \bx, \bar{\bsg}, \bar \bpsi)$ is an equilibrium of  \eqref{dist}. Note that, by Proposition~\ref{prop:eqil}, we have $\bar\bx=\bx^*$, $ \bar{\bsg}= \bar{\bsg}^*$, and $\bar{\bpsi}\in \Psi$, with $\Psi$ given by \eqref{psi_set}.
Consider the following Lyapunov function candidate 
	\begin{equation*}
	V(\tilde{\bx},\tilde{\bsg},\tilde{\bpsi}):=\frac{1}{2}\|\col (\tilde{\bx},\tilde{\bsg},\tilde{\bpsi})\|^ 2.
	\end{equation*}
	As a result, one can use \eqref{F} and \eqref{dist}   to get
	\begin{equation*}
	\dot V=-\col (\tilde{\bx},\tilde{\bsg})^\top F(\bx, \bsg)-\tilde{\bsg}^\top(L\otimes I_n)\bpsi+\tilde{\bpsi}^\top  (L\otimes I_n) \bsg.
	\end{equation*}
	By adding and subtracting $ \col (\tilde{\bx},\tilde{\bsg})^\top F(\bar\bx,\bar \bsg)$ to the right hand side of {the above equation} and using Lemma~\ref{pro1}$ (i) $, we obtain
	\begin{multline}\label{vdot}
\dot V\leq -\epsilon \|\col (\tilde{\bx},\tilde{\bsg})\|^2	-\col (\tilde{\bx},\tilde{\bsg})^\top F(\bar\bx,\bar \bsg)
\\
-\tilde{\bsg}^\top(L\otimes I_n)\bpsi+\tilde{\bpsi}^\top  (L\otimes I_n) \bsg.
%	-\col (\tilde{\bx},\tilde{\bsg})^\top F(\bx, \bsg)\leq -\epsilon \|\col (\tilde{\bx},\tilde{\bsg})\|^2	-\col (\tilde{\bx},\tilde{\bsg})^\top F(\bar\bx,\bar \bsg)
	\end{multline}
Noting  \eqref{F}, \eqref{eqil_x}, and \eqref{eqil_sig}, we have
%	Employing the definition of $ F $ as \eqref{F}, \eqref{eqil_x}, and \eqref{eqil_sig}, we also have
	\begin{equation*}
	-\col (\tilde{\bx},\tilde{\bsg})^\top F(\bar\bx,\bar \bsg)=- \tilde{\bsg}^\top (\bar \bsg-\bt\bar \bx)=\tilde{\bsg}^\top(L\otimes I_n)\bar \bpsi.
	\end{equation*}
	Consequently,  \eqref{vdot} reduces to
	\begin{align*}
	\dot V&\leq -\epsilon \|\col (\tilde{\bx},\tilde{\bsg})\|^2-\tilde{\bsg}^\top(L\otimes I_n)\tilde{\bpsi}+\tilde{\bpsi}^\top  (L\otimes I_n) \bsg\\
	&=-\epsilon \|\col (\tilde{\bx},\tilde{\bsg})\|^2,
	\end{align*}
	where the last equality is obtained using \eqref{eqil_zet}. To conclude the proof, we use LaSalle's invariance principle. Then, $ ({\bx},{\bsg},\bpsi ) $ converges to the largest invariance set in $ \Omega=\left\{({\bx},{\bsg},\bpsi ) \mid \bx=\bar{\bx},\,  \bsg=\bar{\bsg}\right\} $.	{Consequently, we derive from \eqref{dist} and \eqref{eqil_sig} that 
$\bpsi \in  \Psi$, given by \eqref{psi_set}, on the invariant set. Now, note that  $(\bone^\top \otimes I_n) \bpsi(t)$ is a conserved quantity of the system, and that $\bone^\top L^+=0$. Then, by \eqref{psi_set}, we find that the vector $\bpsi$  converges to $\bpsi^*=(L^{+}\otimes I_n) \bt\bx^*+\frac{1}{N} (\bone_N\bone_N^\top \otimes I_n) \bpsi(0)$, which completes the proof.}
%	$ \bpsi $ converges to $ \bar \bpsi $ which belongs to  \eqref{psi_set}.
\end{proof}

\subsection{Privacy Analysis}
%Instead of data perturbation techniques utilized for achieving privacy, we employ 
We resort to the notion of privacy introduced in \cite{Monshizadeh2019plausible} to investigate the privacy of the presented NE seeking algorithm. In particular, privacy is preserved if a curious party cannot uniquely reconstruct the actual private variables of the system. A curious party can be one of the players or an external adversary. 

For technical reasons,  in this section, we restrict the cost functions to 
\begin{equation*}
J_i(x_i,\avg{\bx}):=x_i^\top Q_ix_i+(D_i\, \avg{\bx} +d_i)^\top x_i,
\end{equation*}
where $ Q_i=Q_i^\top\in\R^{n\times n} $, {$Q_i>0$}, $ D_i\in \R^{n\times n} $, and $ d_i\in\R^n $. Note that in this case, the parameters $\mu_i$ and $\ell_i$ in Assumption \ref{asmp1} are given by
\begin{equation*}
\mu_i:=\lambda_{\min}(2Q_i +h_i\frac{D_i+D_i^\top}{2N} ),\quad \ell_i:=\|D_i\|.
\end{equation*}
%If $ \mu_i> \ell_i t_i>0$ , the distributed algorithm \eqref{dist} will provide the NE of the game. 
Let
\begin{align*}
\ba:&=\diag(2Q_i+\frac{h_i}{N}D_i^\top ),\quad  \bm{D}:=\diag(D_i) \\
\bd:&=\col(d_i) , \quad \forall i\in\mathcal{I}.
\end{align*}
Then,  \eqref{dist} reduces to
\begin{equation}\label{distq}
\dot{\bm{\xi}}=\ba_q \bm{\xi}+\bm{D}_q,
\end{equation}
where 
\begin{equation}\label{dist-data}
\begin{split}
\ba_q:&=\begin{bmatrix}
-\bk \ba & -\bk \bm{D} & \bze\\
\bt & -I & -(L\otimes I_n)\\
\bze & (L\otimes I_n) & \bze
\end{bmatrix}\\
\bm{\xi}:&=\col(
{\bx}, {\bsg}, {\bpsi}),\quad \bm{D}_q:=\col(-\bk \bd, \bze,\bze).
\end{split}
\end{equation}

Note that the parameters $ k_i $, $ h_i $, $ Q_i $, $ D_i $, and $ d_i $ are local parameters  associated to each node $i$.
Similarly, the action of each player $x_i$ is local and will be treated as private information. 
On the contrary, both $ {\sigma}_i $ and  $ \psi_i $ are communicated to other agents. 
Therefore,  the latter information is accessible to other players due to direct communication,
or to an adversary as a result of eavesdropping. To provide strong privacy guarantees, we assume that all communicated variables and the Laplacian matrix $L$ are public information, i.e, accessible to a curious party. Such privacy guarantees are valid even if $ N-1 $ players collude to obtain private information of one specific player.  Moreover, the goal and structure of the algorithm are considered public. 
Now, consider a {\em replica} of \eqref{distq} as follows
\begin{equation}\label{replica}
\dot{\bm{\xi}'}=\ba'_q \bm{\xi}'+\bm{D}'_q
\end{equation}
\begin{equation}\label{replica-data}
\begin{split}
\ba_q':&=\begin{bmatrix}
-\bk' \ba' & -\bk' \bm{D}' & \bze\\
\bt' & -I & -(L\otimes I_n)\\
\bze & (L\otimes I_n) & \bze
\end{bmatrix}\\
\bm{\xi}':&=\col(
{\bx}', {\bsg}', {\bpsi}'),\quad \bm{D}_q':=\col(-\bk' \bd', \bze,\bze),
\end{split}
\end{equation}
where the vectors and matrices with ``prime" are defined analogously to the ones without  in \eqref{dist-data}.
Let $(\bx'(t), \bsg'(t), \bpsi'(t))$ be the solution to \eqref{replica} resulting from an initial condition $(\bx'(0), \bsg'(0), \bpsi'(0))$.
Now, we consider the following definition \cite{Monshizadeh2019plausible}: 
%This motivates the following definition  \cite{Monshizadeh2019plausible} 

%In order to investigate the privacy of \eqref{distq}, we assume that for each agent, $ k_i $, $ t_i $, $ Q_i $, $ D_i $, $ d_i $, and $ x_i $ are private; however, since $ {\sigma}_i $, $ \psi_i $, and $ N $ are shared in the network and $ L $ depends on the structure of the graph, we consider them to be public. Now, the definition of privacy for this algorithm can be considered as follows, which is inspired from \cite[Def. 1]{Monshizadeh2019pluasible}.

\begin{definition}\label{d:privacy}
	The privacy for the algorithm \eqref{distq} is preserved if for any initial condition $ x_i(0)\in\R^{n} $, there exist $ x_i'(0)\in\R^{n}$ such that for each $ i\in\mathcal{I} $ we have
	\begin{equation}\label{unobs}
	{\sigma}_i(t)={\sigma}_i'(t) ,\quad \psi_i(t)= \psi_i'(t),\quad \forall t\geq 0, 
	\end{equation}
	and
	\begin{equation}\label{almost}
	x_i(t)\ne x_i'(t),
	\end{equation}
	for $t=0$ and almost all time $ t> 0$.
\end{definition}

The idea behind the definition is that a curious adversary cannot infer whether the accessible trajectories $\bsg(t)$ and $\bpsi(t)$ are generated from \eqref{distq} with the initial condition $(\bx(0), \bsg(0), \bpsi(0))$ or from \eqref{replica} with the initial condition $(\bx'(0), \bsg(0), \bpsi(0))$.
The resulting confusion limits the ability of an adversary to reconstruct the private quantities and action variables of the players. Note that the qualifier ``almost" in \eqref{almost} is due to the fact that potentially $x_i(t)$ and its replica $x_i'(t)$ can coincide on a set of measure zero. 
Since, we work here with linear dynamics under a constant input $ \bm{D}_q $, the condition \eqref{almost} can be replaced by $x_i(0)\ne x_i'(0)$.
%By considering this definition, the following proposition is presented.
Now, we have the following result: 

\begin{theorem}\label{t:privacy}
	The NE seeking algorithm \eqref{distq} preserves privacy.
\end{theorem}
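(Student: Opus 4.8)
The plan is to exhibit, for an arbitrary initial action $\bx(0)$, an explicit replica \eqref{replica} whose observable trajectories $\bsg'(t),\bpsi'(t)$ coincide with $\bsg(t),\bpsi(t)$ while its hidden initial action differs from $\bx(0)$. First I would observe that the $\bpsi$-equation (third block-row of \eqref{dist-data} and \eqref{replica-data}) involves only the public matrix $L$ and is identical in both systems, so $\bpsi'(t)=\bpsi(t)$ holds automatically once $\bsg'(t)=\bsg(t)$. The $\bsg$-equation differs solely in the block $\bt$ versus $\bt'$, so enforcing $\bsg'(t)=\bsg(t)$ together with $\bpsi'(t)=\bpsi(t)$ reduces to the single constraint $\bt'\bx'(t)=\bt\bx(t)$ for all $t$. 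Since $\bt,\bt'$ are block-diagonal with scalar blocks $h_iI_n,h_i'I_n$, this reads $h_i'x_i'(t)=h_ix_i(t)$, i.e. the replica action must be a per-player scaling $x_i'(t)=c_ix_i(t)$ with $c_i:=h_i/h_i'$. This observation pins down the entire construction.

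Guided by this, I would pick scalars $c_i\neq 1$, set $\bm{C}:=\diag(c_1 I_n,\cdots,c_N I_n)$, and define the replica data by $h_i':=h_i/c_i$, $k_i':=k_i$, $Q_i':=Q_i$, $D_i':=c_iD_i$, $d_i':=c_id_i$, with initial condition $(\bx'(0),\bsg'(0),\bpsi'(0))=(\bm{C}\bx(0),\bsg(0),\bpsi(0))$. The core step is then to verify that the ansatz $(\bx'(t),\bsg'(t),\bpsi'(t))=(\bm{C}\bx(t),\bsg(t),\bpsi(t))$ solves \eqref{replica}: the $\bpsi$- and $\bsg$-equations hold because $\bt'\bm{C}=\bt$, and the $\bx$-equation holds because the chosen parameters satisfy $k_i'(2Q_i'+\tfrac{h_i'}{N}D_i'^\top)=k_i(2Q_i+\tfrac{h_i}{N}D_i^\top)$, $k_i'D_i'=c_ik_iD_i$ and $k_i'd_i'=c_ik_id_i$, so that the right-hand side of the replica action dynamics evaluated at $\bm{C}\bx$ equals $\bm{C}$ times the original right-hand side. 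By uniqueness of solutions of the linear system \eqref{replica}, this ansatz \emph{is} the replica solution, yielding \eqref{unobs}, while $\bx'(0)=\bm{C}\bx(0)\neq\bx(0)$ yields \eqref{almost} whenever $x_i(0)\neq 0$.

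The step I expect to be the main obstacle is not the output matching but certifying that the constructed replica is a \emph{plausible} instance of the game, i.e. that the primed data still satisfy the standing assumptions and \eqref{inter}; otherwise the replica would be discarded by a curious party. Here I would compute, using $Q_i'=Q_i$, $D_i'=c_iD_i$ and $h_i'=h_i/c_i$, that $\mu_i'=\lambda_{\min}(2Q_i'+h_i'\tfrac{D_i'+D_i'^\top}{2N})=\mu_i$ and $\ell_i'h_i'=(c_i\|D_i\|)(h_i/c_i)=\ell_ih_i$, so that $Q_i'>0$ and $\mu_i'>\ell_i'h_i'$ hold verbatim. The only genuine restriction is that $k_i'=k_i$ must lie in the interval \eqref{inter} formed from the primed data, which equals the original open interval scaled by $1/c_i^2$; since $k_i$ lies strictly inside the original interval, $c_i^2k_i$ remains inside for every $c_i\neq1$ sufficiently close to $1$. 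Fixing such $c_i$ for each $i$ completes the construction and hence establishes privacy.
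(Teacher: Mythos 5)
Your proposal is correct, and it reaches the paper's conclusion by a genuinely different route. The paper invokes the algebraic characterization from \cite{Monshizadeh2019plausible}: output matching is equivalent to equality of the Markov-parameter data \eqref{privcy} for all $k\geq 0$, and unrolling those equalities yields necessary \emph{and} sufficient conditions \eqref{privacy1}--\eqref{privacy2} on the replica, i.e., a whole family of indistinguishable systems indexed by the block-diagonal scalings $\bm{S}_H$ and $\bm{S}_K$. You instead synthesize a single explicit replica---precisely the member of that family with $\bm{S}_K=I$ and $\bm{S}_H=\bm{C}=\diag(c_1I_n,\cdots,c_NI_n)$---and verify directly at the ODE level that $(\bm{C}\bx(t),\bsg(t),\bpsi(t))$ solves \eqref{replica}, concluding by uniqueness of solutions of linear systems. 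Your route buys two things: it avoids the Markov-parameter machinery (elementary substitution plus uniqueness suffices, since Definition~\ref{d:privacy} only demands existence of one replica), and, more substantively, it makes explicit the plausibility check that the paper leaves implicit, namely that the primed data remain admissible ($Q_i'=Q_i>0$, $\mu_i'=\mu_i$, $\ell_i'h_i'=\ell_ih_i$, and $k_i'=k_i$ inside the primed interval \eqref{inter} for $c_i$ close to $1$); without this, a curious party could discard the replica as violating the public standing assumptions. The paper's route buys the converse direction: it shows every output-matching replica must satisfy \eqref{privacy1}--\eqref{privacy2}, thereby characterizing exactly what an adversary can and cannot identify (e.g., the products $\bt\bx(0)$ and $\bk\ba$ are pinned down). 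Two small points to tighten in your write-up: require $c_i>0$ as well as $c_i\neq 1$, so that $h_i'=h_i/c_i$ remains a positive weight as the game formulation demands; and note that your construction fails to deliver \eqref{almost} when $x_i(0)=0$---a caveat inherited from the scaling structure and shared by the paper's own proof, which likewise forces $x_i'(0)=(h_i/h_i')\,x_i(0)$.
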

\begin{proof}
	%	Since $ {\bsg} $ and $ \bpsi$ are public, they can be considered as the outputs of \eqref{distq}, i.e. $ 
	Defining $\by:=\col({\bsg},\bpsi) $, we have 
	\begin{equation*}
	\by=\bm{C}_q\bm{\xi}, \qquad \bm{C}_q:=	\begin{bmatrix}
	\bze&I&\bze\\
	\bze&\bze&I
	\end{bmatrix}.
	\end{equation*}
	Note that $\by$ contains public information.
	Consider the algorithm \eqref{distq} and its replica \eqref{replica}.
	%	Let
	%	\begin{equation*}
	%	\bm{\xi}':=\begin{bmatrix}
	%	{\bx}'\\ {\bsg}'\\ {\bpsi}'
	%	\end{bmatrix},\ba_q':=\begin{bmatrix}
	%	-\bk' \ba' & -\bk' \bm{D}' & \bze\\
	% 	\bt' & -I & -(L\otimes I_n)\\
	%	\bze & (L\otimes I_n) & \bze
	%	\end{bmatrix},\bm{D}_q':=\begin{bmatrix}
	%	-\bk' \bd'\\ \bze\\\bze
	%	\end{bmatrix}
	%	\end{equation*}
	%	
	Analogous to \cite[Prop. 1]{Monshizadeh2019plausible},  privacy is preserved in the sense of Definition \ref{d:privacy} if and only if for any initial condition $ \bm{\xi}(0)$, 
	%$ \ba_q $, and $ \bm{D}_q $, there exist $ \bm{\xi}'(0) $, 
	%$ \ba_q' $, and $ \bm{D}_q' $, different than the actual values, such that
	there exists $\bm{\xi'(0)}$ such that 
	\begin{equation}\label{privcy}
	\begin{split}
	\bm{C}_q\ba_q^k \bm{\xi}(0)&=\bm{C}_q\ba_q'^k \bm{\xi}'(0)\\
	\bm{C}_q\ba_q^k \bm{D}_q&=\bm{C}_q\ba_q'^k \bm{D}_q',
	\end{split}
	\end{equation}
	for all $ k\geq 0 $, and $x_i'(0) \ne  x_i(0)$ for each $i\in \cali$.  
	Note that the above conditions mean that  $\bm{y}=\bm{C}_q\bxi(t)=\bm{C}_q\bxi'(t)$, as desired.
	%The above condition means that $ \by $ and its derivations are equal to the output of the following system and its derivations
	%	\begin{equation*}
	%	\begin{split}
	%	\dot{\bm{\xi}'}&=\ba_q' \bm{\xi}'+\bm{D}_q'\\
	%	\by'&=\bm{C}_q\bm{\xi}'
	%	\end{split}
	%	\end{equation*}
	%	where the initial condition is $ \bm{\xi}'(0) $. 
	%	
	Verifying \eqref{privcy} for $ k=0 $ results in 
	\begin{equation}\label{prv0}
	{\bsg}(0)=\bsg'(0) ,\, \bpsi(0)=\bpsi'(0).
	\end{equation}
	For $ k=1 $, we obtain 
	\begin{equation*}
	\begin{split}
	\bt {\bx}(0)-{\bsg}(0)-(L\otimes I_n){\bpsi}(0)&= \bt' {\bx}'(0)\\
	&-{\bsg}'(0)-(L\otimes I_n){\bpsi}'(0)\\
	(L\otimes I_n){\bsg}(0)&=(L\otimes I_n){\bsg}'(0)\\
	\bt\bk \bd&=\bt'\bk' \bd'.
	\end{split}
	\end{equation*}
	By using \eqref{prv0}, the above conditions reduce to
	\begin{equation}\label{prv1}
	\bt {\bx}(0)=\bt' {\bx}'(0),\,\bt\bk \bd=\bt'\bk' \bd'.
	\end{equation}
	If we continue this process, it can be seen that the condition \eqref{privcy} becomes
	\begin{equation*}
	\begin{split}
	\bt (\bk \ba)^k {\bx}(0)&=\bt' (\bk' \ba')^k{\bx}'(0)\\
	\bt (\bk \ba)^k \bk \bm{D}&=\bt' (\bk' \ba')^k\bk' \bm{D}'\\
	\bt (\bk \ba)^k \bk \bd&=\bt' (\bk' \ba')^k\bk' \bd',
	\end{split}
	\end{equation*}
	for all $ k\geq 0 $. Note that $ \bt \bk \ba =\bk \ba \bt$ due to block-diagonal structure of the matrices. Therefore, we obtain the following set of equalities
	\begin{equation*}
	\begin{aligned}
	\bt {\bx}(0)&= \bt'{\bx}'(0),\\	
	\bk \ba&=\bk' \ba',	
	\end{aligned}\quad 
	\begin{aligned}
	\bt  \bk \bm{D}&=\bt' \bk' \bm{D}'\\
	\bt  \bk \bd&=\bt' \bk' \bd'.
	\end{aligned}
	\end{equation*}
	Let $\bm{S}_H:={\bm{H}'}^{-1}\bm{H}$ and $\bm{S}_K:={\bm{K}'}^{-1}\bm{K}$. 
	Then, using the commutativity of the involved matrices, the above conditions can be rewritten as
	%Let $\bm{S}_T:={\bm{T}'}^{-1}\bm{T}$ and $\bm{S}_K:={\bm{K}'}^{-1}\bm{K}$. 
	\begin{equation}\label{privacy1}
	\begin{aligned}
	{\bx}'(0)&= \bm{S}_H\bm{x}(0),\\	
	\ba'&=\bm{S}_K \ba,	
	\end{aligned}\quad 
	\begin{aligned}
	\bm{D}'&= \bm{S}_K\bm{S}_H\bm{D}\\
	\bm{d}'&= \bm{S}_K\bm{S}_H\bm{d},
	\end{aligned}
	\end{equation}
	together with
	\begin{equation}\label{privacy2}
	{\bm{H}'}=\bm{H}\bm{S}^{-1}_H, \qquad {\bm{K}'}=\bm{K}\bm{S}^{-1}_K.
	%\bm{S}_T:={\bm{T}'}^{-1}\bm{T}$ and $\bm{S}_K:={\bm{K}'}^{-1}\bm{K}$
	\end{equation}
	Therefore, an adversary cannot distinguish between the actual system parameters/variables and a replica of the system that satisfies \eqref{privacy1} and \eqref{privacy2}. This completes the proof.
	%
	%
	%	  It can be concluded that the privacy is preserved. For instance, for any diagonal positive definite matrix $ \bm{S}\in\R^{nN\times nN} $, using the fact that $ \bt\bm{S}\bk \bm{S}=\bt\bk\bm{S}^{2} $, we see that the above conditions are satisfied if $ \bt'=\bt \bm{S} $, $ \bk'=\bk \bm{S}$, $ \ba'=\bm{S}^{-1} \ba  $, ${\bx}'(0)= \bm{S}^{-1}{\bx}(0) $, $  \bm{D}'=\bm{S}^{-2} \bm{D}$, and $ \bd'=\bm{S}^{-2} \bd$.
\end{proof}

%\todoing{I am here}
%From \eqref{privacy}, we observe that an important condition for privacy is $ \bt\neq I $ since it guarantees that $ {\bx}(0)\neq {\bx}'(0) $. Moreover,  existence of $ \bk $ results in $ \ba\neq \ba' $ and contributes to privacy.

\begin{remark}
	In order to retain the privacy for games with unweighted action variables,  $ h_i=1 $, a suitable change of variables can be exploited. In particular, each  player $ i\in\cali $ uses a local parameter $ p_i>0$, set $x_i=p_i\hat x_i$, and apply the NE seeking policy  
	\begin{equation*}
	\begin{aligned}
	\dot{\hat x}_i&=-\frac{k_i}{p_i}f_i(p_i\hat x_i,\sigma_i)\\
	\dot{{\sigma}}_i&=-{\sigma}_i+p_i \hat x_i-\sum_{j\in\caln_i}(\psi_i-\psi_j)\\
	\dot{\psi}_i&=\sum_{j\in\caln_i}({\sigma}_i-{\sigma}_j),
	\end{aligned}
	\end{equation*}
	with $k_i$ chosen as before. %$ k_i\in\big({(\sqrt{\mu_i}-\sqrt{\mu_i-\ell_i})^{2}}/{\ell_i^2},\ {(\sqrt{\mu_i}+\sqrt{\mu_i-\ell_i})^{2}}/{\ell_i^2}\big) $. 
	Clearly, in this case, $\hat x_i(t)$ converges to $p_i^{-1}x_i^*$.
	Then, the NE can be retrieved by multiplying the latter by $p_i$. 
	The fact that the above algorithm preserves privacy can be shown analogous to Theorem \ref{t:privacy}.
\end{remark}

%\todoing{Nima: controller form to be revised}

\subsection{Robustness Analysis}
{In this section, we again consider the general form of cost functions $ J_i(x_i,\avg{\bx}) $ and investigate robustness of the dynamical algorithm \eqref{dist} against additive perturbations.} The perturbations can capture uncertainty in the payoff functions, irrationality of the players, or a deliberate addition of noise to improve privacy.\\
 Let $ \bxi:=\col(\bx,\bsg) $,  $ G:=\col(\bze, (L\otimes I_n)) $, and with some abuse of the notation $ F(\bxi):=F(\bx,\bsg) $ {with $ F(\bx,\bsg) $ given by \eqref{F}}. Then, we can rewrite \eqref{dist} with the disturbance $ \bnu(t)\in\R ^{2nN} $ as follows
\begin{equation}\label{dist_dt}
\begin{split}
\dot{\bxi}&=-F(\bxi)-G\bpsi+\bnu\\
\dot{\bpsi}&=G^\top \bxi.
\end{split}
\end{equation}
To analyze  performance of the above algorithm, we resort to the notion of input-to-state stability (ISS) \cite[Def. 4.7]{khalil2002nonlinear} \cite{sontag2008input}. Let $ \tilde{\bxi}:=\bxi-\bar{\bxi} $ and $ \tilde{\bpsi}:=\bpsi-\bar{\bpsi} $ with the equilibrium point $ (\bar{\bxi},\bar{\bpsi}) $ satisfying
\begin{align}
\bze&=-F(\bar\bxi)-G\bar\bpsi\label{eq_dt_1}\\
\bze&=G^\top \bar\bxi.\label{eq_dt_2}
\end{align}
Then, \eqref{dist_dt} is ISS with respect to $ (\bar{\bxi},\bar{\bpsi}) $ if for any $ ({\bxi}(0),{\bpsi}(0))\in \R^{2nN}\times \R^{nN} $ and any  {measurable and locally essentially bounded}   $ \bnu(t) $, the state vector $\col (\tilde{\bxi},\tilde{\bpsi}) $ satisfies
\begin{align*}
\|\col (\tilde{\bxi}(t),\tilde{\bpsi}(t))\| &\leq \beta_0(\|\col (\tilde{\bxi}(0),\tilde{\bpsi}(0))\|,t)\\
&+\beta_1 (\sup_{0\leq \tau \leq t}\|\bnu(\tau)\|),\qquad\qquad \forall t\geq 0,
\end{align*}
where $ \beta_0 $ and $ \beta_1 $ are class $ \mathcal{KL} $ and class $ \mathcal{K} $ functions, respectively.
\begin{theorem}\label{th:iss}
	{Consider the NE seeking algorithm \eqref{dist_dt} with  initial condition $ ({\bxi}(0),\bpsi(0) )\in \R^{2nN}\times \R^{nN}$.	Suppose the disturbance vector $ \bnu(t) $  is measurable and locally essentially bounded,}  {and assume that there exists some positive constant $ \gamma_i $ such that  $ \|\nabla f_i(x_i, \sigma_i)\|\leq \gamma_i $ for all $x_i\in \calx_i$, $\sigma_i\in\R^{n}$, and $i\in \cali$}. Let  $ {\bxi}^*:=\col (\bx^*,\bone_N \otimes \avg{\bx^*}) $ and  $\bpsi^*=(L^{+}\otimes I_n) \bt\bx^*+\frac{1}{N} (\bone_N\bone_N^\top \otimes I_n) \bpsi(0)$ with $ \bx^* $ being the unique NE    of the aggregative game $ \mathcal{G}_{\text{agg}} $. Then,  the NE seeking algorithm \eqref{dist_dt} is ISS with respect to the equilibrium point $({\bxi}^*,{\bpsi}^*)  $.
\end{theorem}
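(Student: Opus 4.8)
The plan is to construct an ISS-Lyapunov function by augmenting the quadratic from the proof of Theorem~\ref{th:conv} with a cross term that manufactures dissipation in the $\bpsi$-direction. Write the error coordinates $\tilde{\bxi}:=\bxi-\bxi^*=\col(\tilde{\bx},\tilde{\bsg})$ and $\tilde{\bpsi}:=\bpsi-\bpsi^*$, where $(\bxi^*,\bpsi^*)$ satisfies \eqref{eq_dt_1}--\eqref{eq_dt_2}. One observation drives the whole argument. Since $\bnu$ enters only the $\bxi$-dynamics, the quantity $(\bone_N^\top\otimes I_n)\bpsi(t)$ remains conserved, because $(\bone_N^\top\otimes I_n)\dot{\bpsi}=((\bone_N^\top L)\otimes I_n)\bsg=\bze$; moreover the prescribed $\bpsi^*=(L^{+}\otimes I_n)\bt\bx^*+\frac1N(\bone_N\bone_N^\top\otimes I_n)\bpsi(0)$ together with $\bone_N^\top L^{+}=0$ forces $(\bone_N^\top\otimes I_n)\tilde{\bpsi}(0)=\bze$. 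Hence $(\bone_N^\top\otimes I_n)\tilde{\bpsi}(t)\equiv\bze$, i.e. $\tilde{\bpsi}(t)\in\ima(L\otimes I_n)$ for all $t$ and every initial condition, so along the trajectory we may use $\tilde{\bpsi}^\top(L\otimes I_n)\tilde{\bpsi}\ge\lambda_2\|\tilde{\bpsi}\|^2$ with $\lambda_2>0$ the algebraic connectivity of $G_c$.

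First I would reuse the computation behind Theorem~\ref{th:conv} for $V:=\tfrac12\|\col(\tilde{\bxi},\tilde{\bpsi})\|^2$. Subtracting the equilibrium relations \eqref{eq_dt_1}--\eqref{eq_dt_2}, the skew-symmetric coupling cancels, since $-\tilde{\bxi}^\top G\tilde{\bpsi}+\tilde{\bpsi}^\top G^\top\tilde{\bxi}=0$, exactly as in the unperturbed case, and the additive disturbance now leaves the residual $\tilde{\bxi}^\top\bnu$. Invoking the $\epsilon$-strong monotonicity of $F$ from Lemma~\ref{pro1}$(i)$ then yields $\dot V\le-\epsilon\|\tilde{\bxi}\|^2+\tilde{\bxi}^\top\bnu$. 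This gives dissipation in $\tilde{\bxi}$ but nothing in $\tilde{\bpsi}$; supplying that missing negative term is the crux.

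To repair this I would take $W:=V+\eta\,\tilde{\bsg}^\top\tilde{\bpsi}$ for a small $\eta>0$. Because $|\eta\,\tilde{\bsg}^\top\tilde{\bpsi}|\le\tfrac\eta2(\|\tilde{\bsg}\|^2+\|\tilde{\bpsi}\|^2)$, the function $W$ is sandwiched between two positive-definite, radially unbounded quadratics for $\eta\in(0,1)$, so it is a legitimate candidate. Crucially, the $\bsg$-dynamics in \eqref{dist_dt} are affine, so differentiating the cross term involves no $F$: using $\dot{\tilde{\bsg}}=-\tilde{\bsg}+\bt\tilde{\bx}-(L\otimes I_n)\tilde{\bpsi}+(\text{$\bsg$-block of }\bnu)$ and $\dot{\tilde{\bpsi}}=(L\otimes I_n)\tilde{\bsg}$ produces the dominant term $-\tilde{\bpsi}^\top(L\otimes I_n)\tilde{\bpsi}\le-\lambda_2\|\tilde{\bpsi}\|^2$ together with terms controlled by $\|\tilde{\bxi}\|$, $\|\tilde{\bpsi}\|$ and $\|\bnu\|$. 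Splitting each cross product with Young's inequality so that the indefinite $\|\tilde{\bpsi}\|^2$ contributions inside the bracket stay below $\lambda_2$, and then taking $\eta$ small enough that the accompanying $\|\tilde{\bxi}\|^2$ terms are dominated by the $-\epsilon\|\tilde{\bxi}\|^2$ already in hand, leaves a negative-definite principal part and an estimate of the form $\dot W\le-c\,\|\col(\tilde{\bxi},\tilde{\bpsi})\|^2+c'\,\|\bnu\|^2$ with constants $c,c'>0$.

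This is exactly an ISS-Lyapunov estimate: $\dot W\le-\tfrac{c}{2}\|\col(\tilde{\bxi},\tilde{\bpsi})\|^2$ whenever $\|\col(\tilde{\bxi},\tilde{\bpsi})\|\ge\sqrt{2c'/c}\,\|\bnu\|$, so the standard characterization of ISS \cite[Thm.~4.19]{khalil2002nonlinear} delivers the desired $\mathcal{KL}$/$\mathcal{K}$ bound with respect to $(\bxi^*,\bpsi^*)$. The hypothesis $\|\nabla f_i\|\le\gamma_i$ enters to ensure that $F$ is globally Lipschitz, hence the perturbed vector field is forward complete for every measurable, locally essentially bounded $\bnu$, so that the estimate is valid globally rather than only locally. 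The main obstacle, as indicated, is producing damping in the $\tilde{\bpsi}$ channel; it relies both on the invariance $\tilde{\bpsi}\in\ima(L\otimes I_n)$, which makes $L$ coercive along the trajectory, and on tuning $\eta$ so that the indefinite cross-term contributions do not destroy the dissipation already available in $\tilde{\bxi}$.
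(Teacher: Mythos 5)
Your proposal is correct, and it reaches the ISS estimate by a genuinely different route than the paper. The paper also augments the quadratic with a cross term, but it chooses $V=\tfrac12\|\col(\tilde\bxi,\tilde\bphi)\|^2+\kappa\,\tilde\bphi^\top G^\top\tilde\bxi$ in the \emph{projected} coordinate $\tilde\bphi=(\Pi\otimes I_n)\tilde\bpsi$; differentiating that cross term produces the mixed term $-\kappa\,\tilde\bphi^\top G^\top\big(F(\bxi)-F(\bar\bxi)\big)$, which the paper handles by writing $F(\bxi)-F(\bar\bxi)=U(\bxi,\bar\bxi)\tilde\bxi$ via the fundamental theorem of calculus and bounding $\|U\|$ --- this is precisely where the hypothesis $\|\nabla f_i\|\le\gamma_i$ is consumed --- and dissipation then follows from positive definiteness of a quadratic-form matrix $P$, with coercivity in the $\bpsi$-direction coming from $\|G\tilde\bphi\|\ge\lambda_{\min}(L)\|\tilde\bphi\|$. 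Your cross term $\eta\,\tilde\bsg^\top\tilde\bpsi$ instead couples $\tilde\bpsi$ only to the \emph{affine} $\bsg$- and $\bpsi$-dynamics, so its derivative never touches $F$: the nonlinearity enters your estimate solely through the strong monotonicity of Lemma~\ref{pro1}$(i)$, and coercivity comes from $\tilde\bpsi^\top(L\otimes I_n)\tilde\bpsi\ge\lambda_2\|\tilde\bpsi\|^2$ on the invariant subspace. Both arguments hinge on the same two facts --- conservation of $(\bone_N^\top\otimes I_n)\bpsi(t)$ and $\bone_N^\top L^{+}=0$ --- but the paper invokes them at the very end (to transfer ISS from the $(\tilde\bxi,\tilde\bphi)$ coordinates back to $(\tilde\bxi,\tilde\bpsi)$), whereas you use them upfront to make $L$ coercive along the error trajectory. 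What each buys: the paper's projected dynamics \eqref{dist_dt_er2} are defined on all of $\R^{2nN}\times\R^{nN}$, so the ISS Lyapunov characterization \cite[Thm.~4.19]{khalil2002nonlinear} applies verbatim, while your argument formally requires the (easy, and correctly justified) remark that the affine subspace $\{(\bone_N^\top\otimes I_n)\bpsi=(\bone_N^\top\otimes I_n)\bpsi(0)\}$ is forward invariant for every admissible $\bnu$ because the disturbance does not enter $\dot\bpsi$. In exchange, your computation is more elementary (no mean-value matrix $U$, no matrix positivity test), and it reveals something the paper's proof does not: the gradient bound $\|\nabla f_i\|\le\gamma_i$ is never needed for the dissipation inequality; moreover, since your ISS-Lyapunov estimate itself precludes finite escape and $F$ is $\mathcal{C}^1$ under Assumption~\ref{asmp0}, even the forward-completeness role you assign to that hypothesis is automatic, so your proof actually establishes the theorem under strictly weaker assumptions.
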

%\todoing{Nima: possible revision of the statement based on the comments in the proof}
\begin{proof}
From \eqref{dist_dt}, \eqref{eq_dt_1}, and \eqref{eq_dt_2}, we obtain
\begin{equation}\label{e:inc-dynamics}
\begin{split}
\dot{\tilde\bxi}&=-(F(\bxi)-F(\bar\bxi))-G\tilde{\bpsi}+\bnu\\
\dot{\tilde{\bpsi}}&=G^\top \tilde\bxi.
\end{split}
\end{equation}
%Let us utilize \eqref{pi} and 
Define $ \bpi:=\Pi\otimes I_{n} $ and {$ \tilde{\bphi}:=\bpi \tilde \bpsi$}. Then,  we have
\begin{equation}\label{dist_dt_er2}
\begin{split}
\dot{\tilde\bxi}&=-(F(\bxi)-F(\bar\bxi))-G\tilde{\bphi}+\bnu\\
\dot{\tilde{\bphi}}&=G^\top \tilde\bxi,
\end{split}
\end{equation}	
where $ \tilde{\bphi}=\bphi-\bar{\bphi} $ with $ \bar{\bphi}:=\bpi \bar{\bpsi}$, and we have used the fact that $G=G\bpi$. 
%We first show that the above algorithm is ISS, and then find the equilibrium point $ \bpsi^* \in\varPsi$ with $ \varPsi $ defined as \eqref{psi_set} such that $ \bphi-\bar{\bphi}=\bpsi-\bpsi^* $.

The algorithm \eqref{dist_dt_er2} is ISS if for a continuously differentiable function $ V:\R ^{2nN}\times \R^{nN}\to \R $, there exist class $ \mathcal{K}_\infty $ functions $ \alpha_1 $, $ \alpha_2 $, a class $ \mathcal{K} $ function $ \rho $, and a positive definite function $ W(\tilde{\bxi},\tilde{\bphi}) $ such that \cite[Thm. 4.19]{khalil2002nonlinear}
%\nmargin{cite Sontag as well}
\begin{align}
\alpha_1(\|\col (\tilde{\bxi},\tilde{\bphi})\|)&\leq V(\tilde{\bxi},\tilde{\bphi})\leq \alpha_2(\|\col (\tilde{\bxi},\tilde{\bphi})\|)\label{iss_1}\\
\fpart{V}{\tilde{\bxi}}^\top \dot{\tilde\bxi}+\fpart{V}{\tilde{\bphi}}^\top \dot{\tilde\bphi}&\leq -W(\tilde{\bxi},\tilde{\bphi}),\, \forall \, \|\col (\tilde{\bxi},\tilde{\bphi})\|\geq \rho(\|\bnu \|)>0.\label{iss_2}
\end{align}
Let
\begin{equation}\label{ISS_Function}
V(\tilde{\bxi},\tilde{\bphi}):=\frac{1}{2}\|\col (\tilde{\bxi},\tilde{\bphi})\|^ 2+\kappa \tilde{\bphi}^\top G^\top \tilde\bxi,
\end{equation}
for some $ \kappa\in\R_{>0} $.
% Note that the maximum singular value of $ H $ is $ \bar\sigma(H)=\bar\sigma(L) $; thus, we have
Then,
\begin{equation*}
\begin{split}
|\tilde{\bphi}^\top G^\top \tilde\bxi|&\leq \frac{1}{2}(\|G\tilde{\bphi}\|^{2}+\|\tilde\bxi\|^2)\\
&\leq \frac{\max\{1,\lambda_{\rm max}(L)^2\}}{2}\|\col (\tilde{\bxi},\tilde{\bphi})\|^2,
\end{split}
\end{equation*}
where $\lambda_{\rm max}(L)$ is the largest eigenvalue of $L$. 
	Consequently, \eqref{iss_1} is obtained by considering $ \kappa\in (0,\kappa_1) $, {$ \alpha_1(\|\col (\tilde{\bxi},\tilde{\bphi})\|)=\alpha_1  \|\col (\tilde{\bxi},\tilde{\bphi})\|^2$, and $ \alpha_2(\|\col (\tilde{\bxi},\tilde{\bphi})\|)=\alpha_2  \|\col (\tilde{\bxi},\tilde{\bphi})\|^2$, with $ \kappa_1=\frac{1}{\max\{1,\lambda_{\rm max}(L)^2\}} $  and
\begin{align*}
\alpha_1&=\frac{1-\kappa\max\{1,\lambda_{\rm max}(L)^2\}}{2}\\
\alpha_2&=\frac{1+\kappa\max\{1,\lambda_{\rm max}(L)^2\}}{2}.
\end{align*}}
We compute the time derivative of $ V $ along the solutions of the system, and use \eqref{dist_dt_er2} together with $ \epsilon $-strong monotonicity of $ F(\bxi) $ to obtain
\begin{equation}\label{vdot_iss}
\begin{split}
\dot{V}\leq -\epsilon\|\bxi\|^2&+ \kappa \|G^\top \tilde\bxi\|^{2}-\kappa \tilde{\bphi}^\top G^\top (F(\bxi)-F(\bar\bxi))\\
&-\kappa \| G\tilde{\bphi}\|^{2}+(\tilde\bxi+\kappa G\tilde{\bphi})^\top \bnu.
\end{split}
\end{equation}
Define
\begin{equation*}
U(\bxi,\bar{\bxi}):=\int_{0}^{1}\nabla F(\bar{\bxi}+s(\bxi-\bar{\bxi}))ds.
\end{equation*}
By the fundamental theorem of calculus, we have
%\todoiny{double check for whole $ \R^n $}
\begin{equation*}
F(\bxi)-F(\bar\bxi)=U(\bar{\bxi},\bxi)\tilde{\bxi}.
\end{equation*}
Substituting the equality above into \eqref{vdot_iss} yields 
\begin{equation}\label{vdot_d}
\dot{V}\leq -\col (\tilde{\bxi},G\tilde{\bphi})^\top P(\bxi, \bar \bxi) \,\,\col (\tilde{\bxi},G\tilde{\bphi})+\col (\tilde{\bxi},\tilde{\bphi})^\top R \,\bnu,
\end{equation}
where
\begin{equation*}
P(\bxi, \bar \bxi):=\begin{bmatrix}
\epsilon I-\kappa  GG^\top&\frac{1}{2}\kappa U(\bxi, \bar \bxi)^\top \\
\frac{1}{2}\kappa U(\bxi, \bar \bxi)& \kappa I
\end{bmatrix},\quad R:=\begin{bmatrix}
I\\ \kappa G^\top 
\end{bmatrix}.
\end{equation*}
Clearly, the matrix $ P $ is positive definite if and only if, for all $\bxi\in \R^{2nN}$,
\begin{equation*}
\kappa>0,\qquad \epsilon I-\kappa  GG^\top-\frac{1}{4}\kappa U(\bxi, \bar \bxi)^\top U(\bxi, \bar \bxi)>0.
\end{equation*}
{By using $ \|\nabla f_i(x_i, \sigma_i)\|\leq \gamma_i $, it is straightforward to investigate that $ \|U(\cdot, \cdot)\|^2\leq \bar\gamma^2  + \bar h\,^2+ 1$, where $\bar \gamma:=\max_{i\in \cali} (\gamma_i k_i)$ and  $\bar h:=\max_{i\in \cali} h_i$.}
Hence, we conclude that $ P>0 $ if $ \kappa\in (0,\kappa_2  )$ with $ \kappa_2=\frac{4\epsilon}{\bar\gamma^2  + \bar h\,^2+ 1+4\lambda_{\rm max}(L){^2}} $. Therefore, there exists $ \delta>0 $ such that $ P\geq \delta I $. Moreover, we have $ \|R\|=\sqrt{1+\kappa^2\lambda_{\rm max}(L){^2}} $. Then, by \eqref{vdot_d}, we find that
\begin{equation}\label{vdot_iss1}
\dot{V}\leq -\delta \|\col (\tilde{\bxi},G\tilde{\bphi})\|^2+\sqrt{1+\kappa^2\lambda_{\rm max}(L)^2}\|\col (\tilde{\bxi},\tilde{\bphi})\|\|\bnu\|.
\end{equation}
%From the fundamental theorem of linear algebra \cite{strang1993fundamental} and $ \bpi=\bpi^\top $, we know that $ \ima(\bpi) $ is orthogonal to $ \ker(\bpi) $. In addition, since $ \Pi=L^{+}L=LL^{+} $, we have $\ker(\bpi)= \ker(H) $, which means that  $ \tilde{\bphi}\in\ima(\bpi) $ is orthogonal to $\ker(H)  $, and we get
Noting that $ \tilde{\bphi}\in\ima(\bpi) $, we have
\begin{equation*}
\|G\tilde{\bphi}\|\geq  {\lambda_{\rm min}(L) \|\tilde{\bphi}\|},
\end{equation*}
where $ \lambda_{\rm min} (L) $ is the smallest nonzero eigenvalue of $ L $. This together with  \eqref{vdot_iss1} results in 
\begin{equation*}
\dot{V}\leq -m\|\col (\tilde{\bxi},\tilde{\bphi})\|^2+\sqrt{1+\kappa^2\lambda_{\rm max}(L)^2}\|\col (\tilde{\bxi},\tilde{\bphi})\|\|\bnu\|,
\end{equation*}
where $ m:=\delta  \min\{1,\lambda_{\rm min}(L)^2\} $. Hence, \eqref{iss_2} is obtained by setting {$ W(\tilde{\bxi},\tilde{\bphi})=\alpha_3\|\col (\tilde{\bxi},\tilde{\bphi})\|^2 $ and $ \rho(\|\bnu \|)=\alpha_4 \|\bnu\| $ with
\begin{align*}
\alpha_3&=m(1-\beta )\\
\alpha_4&=\frac{1}{\beta m}\sqrt{1+\kappa^2\lambda_{\rm max}(L)^2},
\end{align*}}
for some $\beta\in (0, 1)$. 
Consequently,  \eqref{dist_dt_er2} is ISS for $ 0<\kappa<\min\{\kappa_1,\kappa_2  \}$, {and we have
\begin{align*}
\beta_0(\|\col (\tilde{\bxi}(0),\tilde{\bpsi}(0))\|,t)&=\sqrt{\frac{\alpha_2}{\alpha_1}}e^{-\frac{\alpha_3}{2\alpha_2}t}\|\col (\tilde{\bxi}(0),\tilde{\bpsi}(0))\|\\
\beta_1 (\sup_{0\leq \tau \leq t}\|\bnu(\tau)\|)&=\sqrt{\frac{\alpha_2}{\alpha_1}}\alpha_4 \sup_{0\leq \tau \leq t}\|\bnu(\tau)\|.
\end{align*}}
 {
Note that we have shown the ISS property in the coordinates $(\tilde\bxi, \tilde\bphi)$, where $\tilde\bphi=\bpi\tilde\bpsi$. 
In addition, note that the incremental form \eqref{e:inc-dynamics} can be  written with respect to any equilibrium $(\bar\bxi , \bar\bpsi)$, where $\bar\bxi=\bxi^*$ and $\bar\bpsi \in \Psi$ by Proposition \ref{prop:eqil}. To conclude the ISS property of  \eqref{dist_dt} with respect to the equilibrium point $({\bxi}^*,{\bpsi}^*)$, it suffices to show that $\bpi ( \bpsi(t) - \bpsi^*)= \bpsi(t) - \bpsi^*$, for all $ t$.  The latter is equivalent to
\[
(\bone_N \bone_N^\top \otimes I_n) (\bpsi(t) - \bpsi^*)=\bze,
\]
which can be rewritten as
\[
\bone_N \otimes (\bone_N^\top \otimes I_n) (\bpsi(t) - \bpsi^*)=\bze.
\]
Noting that $(\bone_N^\top \otimes I_n) \bpsi(t) $ is a conserved quantity of the system, the above equality reduces to $(\bone_N^\top \otimes I_n) (\bpsi(0) - \bpsi^*)=\bze$. The latter holds, noting the definition of  $\bpsi^*$ in the theorem. This completes the proof.}
% {\color{gray}
% The next step is to find $  \bpsi^* \in\varPsi $, and in this regard, let us  consider
%\begin{equation*}
%\bpsi=\bpi \bpsi+(I-\bpi)\bpsi
%\end{equation*}
%Because $(I-\bpi)\bpsi=\bone_{N}\otimes\avg{\bpsi}  $,  the above equation can be used to decompose $ \bpsi $ into two parts that respectively belong to $ \ima(\bpi) $ and its complement, i.e., $ \ker(H)=\ker(L\otimes I_n) $. Thus, we have
%\begin{equation}\label{psi_dcomp}
%\bpsi(t)=\bphi (t)+\bone_{N}\otimes\avg{\bpsi(t)}
%\end{equation}
%Taking the derivation of the preceding equation and employing $ \bphi=\bpi \bpsi $ and \eqref{dist_dt} result in $ \fd{}{t}\avg{\bpsi(t)}=0 $. This means that $ \avg{\bpsi(t)}=\avg{\bpsi(0)} $, and in turn, from \eqref{psi_dcomp} we get
%\begin{equation}\label{psi_dcomp_t0}
%\bpsi(t)=\bphi (t)+\bone_{N}\otimes\avg{\bpsi(0)}
%\end{equation}
%On the other hand, it can be seen from \eqref{psi_set} that for all $ \bar{\bpsi}\in \varPsi $, we have $ \bar{\bphi}=(L^{+}\otimes I_n)\bx^* $ which is a unique vector. Consequently, using \eqref{psi_set} and \eqref{psi_dcomp_t0}, we define $ {\bpsi}^*:=(L^{+}\otimes I_n) \bx^*+\bone_N \otimes \avg{\bpsi(0)} $, that belongs to $ \varPsi $ and satisfies $ \bpsi-\bpsi^*=\bphi-\bar{\bphi} $.}
%
%%	Since $ F(\bxi) $ is $ \epsilon $-strongly monotone, we have $ \by^\top \nabla F(\bxi) \by \geq \epsilon \|\by\|^2 $ for all $ \by\in\R^{2nN} $ \cite[Prop. 2.3.2 (c)]{facchinei2007finite}.
\end{proof}
%From Theorem~\ref{th:iss}, it can be seen that the algorithm is robust to disturbances; however, the final solution deviates from the NE based on the norm of disturbance. This suggests that if the disturbance is due to uncertainty in the payoff functions or irrational players, the game should be modified to make sure that the NE will be achieved. However, it is not preferable if the disturbance is because of added signals for improving privacy, and this issue is addressed in the next section.
%
%\nmargin{the paragraph should be revised}

%
\begin{remark}\hspace{-1mm}\footnote{The authors thank Sergio Grammatico for pointing out this connection.}
{In the case of general games and by considering suitable assumptions on  the pesudo-gradient mapping,  the presented NE seeking algorithm  in \cite{gadjov2018passivity} is exponentially stable \cite[Thm. 1 and 2]{gadjov2018passivity}. Therefore, it is also ISS with respect to additive time-varying disturbances \cite[Lem. 4.6]{khalil2002nonlinear}. However, that algorithm is fundamentally different than ours, which makes the analysis dissimilar. Specifically, the consensus term in \cite{gadjov2018passivity} appears as damping on the relative state variables, which contributes to  the exponential convergence property. For our presented algorithm, the consensus action appears as cross terms, resulting in the  presence of undamped communicating  variables $ \bpsi $ in \eqref{dist_dt}.  To overcome  this technical difficulty, we included a sufficiently small cross-term in the ISS Lyapunov function (see the second term in the right hand side of \eqref{ISS_Function}).}
\end{remark}	
%	In their algorithm the consensus term appears as damping on the state variables, which simplifies the exponential convergence analysis. 
%	In our case (aggregative games), the consensus action appears as cross terms. While these cross terms nicely cancel out in LaSalle-based convergence analysis, constructing strict Lyapunov function to prove ISS becomes very challenging (due to the presence of undamped state variables). Therefore, the ISS Lyapunov function cannot be taken as the sum of the norms of state variables, and we need a sufficiently small cross-term, etc..
	
%	 For our presented algorithm, the main technical challenge in establishing the ISS results  is  the presence of undamped communicating  variables $ \bpsi $ in \eqref{dist_dt}.  To overcome  this technical difficulty, we included a  cross-term in the ISS Lyapunov function. We note that there are some similarities between }

{
\begin{remark}
The assumption of the boundedness of $\|\nabla f_i(\cdot,\cdot)\|$ can be relaxed at the expense of {establishing ISS in a local sense. In particular, for any compact set around the equilibrium, one can find restriction on the size of the disturbance such that ISS locally holds \cite{mironchenko2016local}}. %{add remark that ashish needs similar assumptions}.%stating the results locally. In particular, one needs to  state the ISS property  in a suitable compact set around the equilibrium, and impose restrictions on the size of disturbance.
\end{remark}}
%\todoing{Nima: @Claudio: detailed investigation should be postponed. Do you see a way to improve the remark?}

\section{Distributed NE Seeking Dynamics for Constrained Actions}\label{projection}
This section extends the NE dynamics to the case when the action set is constrained to a compact set.  Let  the action set be given by $\calx_i \subset \R^n$, {and consider the following assumption.}  %An NE of the game is such that
%\begin{equation*}
%x_i^*\in \argmin_{y\in\calx  _i}J_i(y,\frac{h_i}{N}y+\frac{1}{N}\sum_{j\neq i}h_jx_j^*),\quad \forall \, i\in\cali.
%\end{equation*}

%\begin{assumption}[\cite{basar1999dynamic}]\label{asmpset}
%	For all $ i\in\cali $, the action set $ \calx_i\subseteq\R^n $ is non-empty, convex, and compact, and the cost function $ J_i $ is jointly continuous in all its arguments. 
%\end{assumption}

\begin{assumption}[{{\cite[Asm. 2(ii)]{gadjov2018passivity}}}]\label{asmpset}
	For all $ i\in\cali $, the action set  $ \calx_i\subset\R^n $ is non-empty, convex, and compact, and the cost function $ J_i $ is $ \mathcal{C}^1 $ in all its arguments. 
\end{assumption}

The following lemma proves that the game has a unique NE.

\begin{lemma}\label{lemNEset}
	Let Assumptions~\ref{asmp1} and \ref{asmpset} be satisfied, 	then the aggregative game $ {\cal G}_{\text{agg}}=\big({\cal I} ,(J_i)_{i\in \cal I}, (\calx  _i)_{i\in \cal I}\big) $ with the cost function \eqref{cost} has a unique NE $ \bx^*\in\calx $  which is the solution of the variational inequality VI$ (\allowdisplaybreaks\calx,\bk\col\big((f_i(x_i,\avg{\bx}))_{i\in\cali}\big)) $ with $ \calx:=\prod_{i\in \cali}\calx_i $, $ f_i(\cdot) $ defined as \eqref{fi}, and $ k_i $ selected as \eqref{inter}. 
\end{lemma}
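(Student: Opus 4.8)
The plan is to prove this via the classical correspondence between Nash equilibria and variational inequalities, supplying the two ingredients that this correspondence needs — convexity of each player's reduced objective and strong monotonicity of the scaled pseudo-gradient — from Assumption~\ref{asmp1} and Lemma~\ref{pro1}$(ii)$, and then closing with a standard VI existence/uniqueness argument on a compact convex set.

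First I would fix a player $i\in\cali$ together with the rival actions $\bx_{-i}$ and write player $i$'s objective as the reduced cost $\phi_i(y):=J_i\big(y,\tfrac{h_i}{N}y+c_i\big)$ with $c_i:=\tfrac{1}{N}\sum_{j\neq i}h_jx_j$ constant. Differentiating through the aggregate argument $\sigma(y):=\tfrac{h_i}{N}y+c_i$ shows $\nabla\phi_i(y)=f_i(y,\avg{\bx})$, with $f_i$ as in \eqref{fi}. I would then prove $\phi_i$ convex by showing $\nabla\phi_i$ is monotone: decomposing $f_i(y_1,\sigma(y_1))-f_i(y_2,\sigma(y_2))$ through the intermediate point $f_i(y_2,\sigma(y_1))$, the $\mu_i$-strong monotonicity of $x_i\mapsto f_i(x_i,\sigma_i)$ contributes at least $\mu_i\|y_1-y_2\|^2$, while the $\ell_i$-Lipschitzness in $\sigma_i$ costs at most $\ell_i\tfrac{h_i}{N}\|y_1-y_2\|^2$. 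Since $\mu_i>\ell_i h_i\geq \ell_i\tfrac{h_i}{N}$, the gradient is strongly monotone, so each $\phi_i$ is (strongly) convex over the convex set $\calx_i$.

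With convexity in hand, $x_i^*$ minimizes $\phi_i$ over $\calx_i$ if and only if the first-order condition $(y-x_i^*)^\top f_i(x_i^*,\avg{\bx^*})\geq 0$ holds for all $y\in\calx_i$. Stacking these conditions over $i\in\cali$ and using that $\calx=\prod_{i\in\cali}\calx_i$ is a product set gives exactly VI$\big(\calx,\col((f_i(x_i,\avg{\bx}))_{i\in\cali})\big)$, so the NE set equals this VI's solution set. Because $\bk$ is block diagonal and positive definite and $\calx$ is a product set, the variational inequality decouples across players and each block may be rescaled by $k_i>0$ without altering its solutions; hence VI$\big(\calx,\bk\col((f_i)_{i\in\cali})\big)$ has the same solution set, coinciding with the Nash equilibria. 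Finally, the map $\bk\col((f_i(x_i,\avg{\bx}))_{i\in\cali})$ is continuous (Assumption~\ref{asmpset} gives $J_i\in\mathcal{C}^1$, so $f_i$ is continuous) and, by Lemma~\ref{pro1}$(ii)$, $\epsilon$-strongly monotone; together with $\calx$ being nonempty, convex, and compact this yields existence of a VI solution (e.g.\ via Brouwer's fixed-point theorem applied to $\bx\mapsto\proj_\calx(\bx-\bk\col((f_i)_{i\in\cali}))$), and strong monotonicity forces uniqueness. By the correspondence above, this unique $\bx^*\in\calx$ is the unique NE of $\mathcal{G}_{\text{agg}}$.

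The main obstacle I anticipate is the convexity step: establishing that each reduced objective $\phi_i$ is convex even though $\avg{\bx}$ depends on $x_i$ itself. This self-dependence is exactly what forces the combined derivative $f_i$ of \eqref{fi} to appear and is precisely where the gap $\mu_i>\ell_i h_i$ in Assumption~\ref{asmp1} is consumed; once convexity is secured, the remaining steps are routine applications of convex optimality conditions and of the standard existence/uniqueness theorem for continuous strongly monotone variational inequalities.
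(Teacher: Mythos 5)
Your proposal is correct and follows essentially the same route as the paper's proof: strong convexity of each player's reduced cost (the computation in \eqref{propStrong}), the NE--VI equivalence, the rescaling of the VI by $k_i>0$ using the product structure of $\calx$, and uniqueness from the strong monotonicity supplied by Lemma~\ref{pro1}$(ii)$. The only minor difference is where existence is sourced --- the paper invokes the game-theoretic existence theorem \cite[Thm. 4.3]{basar1999dynamic} and then passes to the VI, whereas you obtain existence directly at the VI level via compactness and Brouwer's fixed-point theorem; both are standard and interchangeable here.
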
%\nmargin{$\calx$?}
\begin{proof}
The claim can be proven by suitably adapting the results of  \cite{facchinei2007finite}. For the sake of completeness, we have provided a proof in Appendix A.
\end{proof}
%\todoing{Nima: Start with the one with $K$, show that it solves the VI of the one without $K$ and use uniqueness of $\bx'$.}
To obtain the NE of the game in a distributed fashion, we again assume that the agents can exchange some variables through a connected undirected graph $ G_c $, and consider the following algorithm
\begin{equation}\label{distset_indv}
 \begin{aligned}
\dot{x}_i&=\Pi_{\calx_i}\left(x_i,-k_if_i(x_i,\sigma_i)\right)\\
\dot{{\sigma}}_i&=-{\sigma}_i+h_ix_i-\sum_{j\in\caln_i}(\psi_i-\psi_j)\\
\dot{\psi}_i&=\sum_{j\in\caln_i}({\sigma}_i-{\sigma}_j),
\end{aligned}
\end{equation}
where $ i\in\cali $ and $ \Pi_{\calx_i}(x_i,v) $ is the projection operator of the vector $ v\in\R^n $ on to the tangent cone of $ \calx_i $ at the point $ x_i\in\calx_i $. In vector form we have
\begin{equation}\label{distset}
\begin{split}
\dot{\bx}&=\Pi_{\calx}\left(\bx,-\bk\col\big((f_i(x_i,\sigma_i))_{i\in\cali}\big)\right)\\
\dot{\bsg}&=- \bsg+\bt\bx-(L\otimes I_n)\bpsi\\
\dot{\bpsi}&=(L\otimes I_n)\bsg.
\end{split}
\end{equation}
Note that \eqref{distset} is a discontinuous dynamical algorithm due to the projection operator. Therefore, we briefly discuss existence and uniqueness of solutions for this system.
%; thus, it is required to guarantee that it has a solution which is unique for all admissible initial conditions. To this end, 
Consider the collective projected-vector form of the algorithm as follows
\begin{equation*}%\label{distset}
\col(\dot{\bx}, \dot{\bsg}, \dot{\bpsi})=\Pi_{\calx\times \R^{nN}\times \R^{nN}}\Big(\col({\bx}, {\bsg}, {\bpsi}),-F_{\text{ext}(\bx,\bsg,\bpsi)}\Big),
\end{equation*}
where %$ F_{\text{ext}(\bx,\bsg,\bpsi)} $ 
\begin{equation*}
F_{\text{ext}(\bx,\bsg,\bpsi)}:=\left[\begin{array}{c}
F(\bx,\bsg)+G\bpsi\\
-(L\otimes I_n)\bsg
\end{array}\right],
\end{equation*}
with $ F(\bx,\bsg) $ given by \eqref{F} and $ G:=\col(\bze, (L\otimes I_n)) $. %\nmargin{use $H$ for a more compact notation}
 Using Assumption~\ref{asmpset} and the fact that $\R^{nN}  $ is a clopen set (closed-open set), the set $ \calx\times \R^{nN}\times \R^{nN} $ is closed and convex. In addition, by considering Lemma~\ref{pro1}, it follows that $ F_{\text{ext}(\bx,\bsg,\bpsi)} $ is monotone. Therefore, from \cite[Thm. 1]{brogliato2006equivalence}, we conclude that for any initial condition  $ ({\bx}(0),{\bsg}(0),\bpsi(0) )\in \calx \times \R^{nN}\times \R^{nN}$, the algorithm \eqref{distset} has a unique solution which belongs to $ \calx \times \R^{nN}\times \R^{nN} $ for almost all $ t\geq 0 $. Converging the algorithm to a point corresponding to the NE of the game is established next.
\begin{theorem}\label{the:convSet}
	{Let Assumptions~\ref{asmp1} and \ref{asmpset} be satisfied, and }consider the NE seeking algorithm \eqref{distset} with  initial condition $ ({\bx}(0),{\bsg}(0),\bpsi(0) )\in \calx\times \R^{nN}\times \R^{nN}$. Then, the solution $ ({\bx},{\bsg},\bpsi) $   converges to the equilibrium point $ (\bar\bx,\bar\bsg,\bar\bpsi)=(\bx^*,\bone_N \otimes \avg{\bx^*}, \bpsi^*) $ where $ \bx^* $  is the unique NE of the aggregative game $ \mathcal{G}_{\text{agg}} $ and  $\bpsi^*\in \Psi$ is given by $\bpsi^*=(L^{+}\otimes I_n) \bt\bx^*+\frac{1}{N} (\bone_N\bone_N^\top \otimes I_n) \bpsi(0)$.
\end{theorem}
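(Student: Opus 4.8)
The plan is to mirror the Lyapunov argument of Theorem~\ref{th:conv}, isolating the two places where the projection operator alters the analysis. First I would characterize the equilibria along the lines of Proposition~\ref{prop:eqil}: the $\bsg$- and $\bpsi$-equations in \eqref{distset} are unchanged, so $\bze=(L\otimes I_n)\bar\bsg$ together with connectedness still forces $\bar\bsg=\bone_N\otimes\avg{\bar\bx}$ and hence $\bar\sigma_i=\avg{\bar\bx}$. The equilibrium of the projected $\bx$-dynamics, $\bze=\Pi_{\calx}(\bar\bx,-\bk\col((f_i(\bar x_i,\bar\sigma_i))_{i\in\cali}))$, is equivalent (via Moreau's decomposition, since the tangent-cone projection vanishes) to the normal-cone inclusion $-\bk\col((f_i(\bar x_i,\avg{\bar\bx}))_{i\in\cali})\in\caln_\calx(\bar\bx)$. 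This is exactly the variational inequality $\mathrm{VI}(\calx,\bk\col((f_i(x_i,\avg{\bx}))_{i\in\cali}))$, whose unique solution is $\bx^*$ by Lemma~\ref{lemNEset}; thus $\bar\bx=\bx^*$ and, from the $\bsg$-equilibrium relation, $\bar\bpsi\in\Psi$ given by \eqref{psi_set}.

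Next I would use the same Lyapunov candidate $V=\tfrac12\|\col(\tilde\bx,\tilde\bsg,\tilde\bpsi)\|^2$ with $\tilde\bx=\bx-\bar\bx$, $\tilde\bsg=\bsg-\bar\bsg$, $\tilde\bpsi=\bpsi-\bar\bpsi$. The $\bsg$- and $\bpsi$-contributions to $\dot V$ are identical to those in Theorem~\ref{th:conv}; the only new ingredient is the projected term $\tilde\bx^\top\dot\bx$. For this I would invoke the standard projection inequality following from Moreau's decomposition: for $\bx,\bar\bx\in\calx$ and any $v$, one has $(\bx-\bar\bx)^\top\Pi_\calx(\bx,v)\le(\bx-\bar\bx)^\top v$. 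Taking $v=-\bk\col((f_i(x_i,\sigma_i))_{i\in\cali})$, the first block-row of $F$ in \eqref{F}, yields $\tilde\bx^\top\dot\bx\le-\tilde\bx^\top\bk\col((f_i(x_i,\sigma_i))_{i\in\cali})$. Combining this with the unchanged $\bsg,\bpsi$ terms reproduces, now as an inequality, the identity $\dot V\le-\col(\tilde\bx,\tilde\bsg)^\top F(\bx,\bsg)-\tilde\bsg^\top(L\otimes I_n)\bpsi+\tilde\bpsi^\top(L\otimes I_n)\bsg$ from the proof of Theorem~\ref{th:conv}.

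I would then add and subtract $\col(\tilde\bx,\tilde\bsg)^\top F(\bar\bx,\bar\bsg)$ and apply the $\epsilon$-strong monotonicity of $F$ (Lemma~\ref{pro1}(i)) to extract the term $-\epsilon\|\col(\tilde\bx,\tilde\bsg)\|^2$. This is where the one genuinely new step arises: because of the constraints, the first block of $F(\bar\bx,\bar\bsg)$ no longer vanishes. The term it produces is $-\tilde\bx^\top\bk\col((f_i(\bar x_i,\avg{\bar\bx}))_{i\in\cali})$, which by the normal-cone inclusion established above, together with $\bx(t)\in\calx$, is $\le 0$ and therefore only strengthens the bound. The second block of $F(\bar\bx,\bar\bsg)$ still equals $-(L\otimes I_n)\bar\bpsi$, so the remaining cross terms telescope exactly as in the unconstrained proof (using $(L\otimes I_n)\bar\bsg=\bze$), leaving $\dot V\le-\epsilon\|\col(\tilde\bx,\tilde\bsg)\|^2$.

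Finally I would conclude by an invariance argument. Since $\dot V\le 0$ the trajectories are bounded, and the largest invariant set contained in $\{\bx=\bar\bx,\ \bsg=\bar\bsg\}$ forces $\bpsi$ to be constant with $(L\otimes I_n)\bpsi=\bt\bx^*-\bone_N\otimes\avg{\bx^*}$, i.e.\ $\bpsi\in\Psi$; the exact limit $\bpsi^*$ is then pinned down by the conserved quantity $(\bone_N^\top\otimes I_n)\bpsi(t)=(\bone_N^\top\otimes I_n)\bpsi(0)$ and the identity $\bone_N^\top L^+=\bze$, precisely as in Theorem~\ref{th:conv}. The main obstacle I anticipate is that \eqref{distset} is a discontinuous (projected) system, so LaSalle's principle cannot be quoted in its classical form; I would instead appeal to the invariance results available for projected dynamical systems and Carathéodory solutions, whose well-posedness has already been secured here via \cite{brogliato2006equivalence}, after checking that $V$ is a valid (continuously differentiable, nonincreasing along solutions) candidate in that setting.
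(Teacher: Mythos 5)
Your proposal is correct and follows essentially the same route as the paper's proof: the same equilibrium characterization via Moreau's decomposition, the variational inequality, and Lemma~\ref{lemNEset}; the same quadratic Lyapunov function with the projection inequality $\tilde\bx^\top\Pi_\calx(\bx,v)\leq\tilde\bx^\top v$; and the same conserved-quantity argument to pin down $\bpsi^*$. The only cosmetic difference is that you split the nonvanishing equilibrium term into the normal-cone inclusion at $\bar\bx$ plus the $\bsg$-equilibrium equation, whereas the paper bundles both into $\tilde\bxi^\top\big(F(\bar\bxi)+G\bar\bpsi\big)\geq 0$ via a second application of Moreau's decomposition at $\bar\bxi$ --- the same content --- and, like the paper (which cites an invariance argument for projected dynamics), you correctly flag that classical LaSalle must be replaced by an invariance principle for the discontinuous projected system.
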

\begin{proof}
At the equilibrium point $ (\bar\bx,\bar\bsg,\bar\bpsi) $, by  \eqref{distset}, we have
	\begin{equation}\label{equilSet}
	\begin{split}
	\bze&=\Pi_{\calx}\left(\bar\bx,-\bk\col\big((f_i(\bar x_i,\bar\sigma_i))_{i\in\cali}\big)\right)\\
	\bze&=-\bar \bsg+\bt\bar \bx-(L\otimes I_n)\bar \bpsi\\
	\bze&=(L\otimes I_n)\bar\bsg.
	\end{split}
	\end{equation}
	Similar to the proof of Proposition~\ref{prop:eqil}, it can be shown that $\bar \bsg=\bone_N\otimes \avg{\bar\bx}  $.	Consequently, we obtain the following equality by using  Moreau's decomposition theorem
	\begin{equation*}
	\begin{split}
	\bze&=\Pi_{\calx}\left(\bar\bx,-\bk\col\big((f_i(\bar x_i,\avg{\bar \bx}))_{i\in\cali}\big)\right)\\
	&=-\bk\col\big((f_i(\bar x_i,\avg{\bar \bx}))_{i\in\cali}\big)\\
	&\quad -\text{proj}_{\caln_{\calx}(\bar{\bx})}\left(-\bk\col\big((f_i(\bar x_i,\avg{\bar \bx}))_{i\in\cali}\big)\right),
	\end{split}
	\end{equation*}
	where $ \caln_{\calx}(\bar{\bx}) $ is the normal cone of $ \calx $ at $ \bar\bx\in\calx $. This means that 
	\begin{equation*}
	-\bk\col\big((f_i(\bar x_i,\avg{\bar \bx}))_{i\in\cali}\big)\in \caln_{\calx}(\bar\bx).
	\end{equation*}
	In other words, $ \bar{\bx} $ is the solution of VI$ (\calx,\bk\col\big((f_i( x_i,\avg{ \bx}))_{i\in\cali}\big)) $, and from Lemma~\ref{lemNEset}, we conclude that $ \bar{\bx}=\bx^* $. The proof of $ \bar \bpsi\in\varPsi $ is similar to Proposition~\ref{prop:eqil}.
	
	To show convergence, let $ \bxi:=\col({\bx},{\bsg}) $, $\Lambda:=\calx\times \R^{nN}  $, $ F(\bxi)=F(\bx,{\bsg}) $, and the Lyapunov function candidate $ V(\tilde\bxi,\tilde\bpsi):=\frac{1}{2}\|\col(\tilde\bxi,\tilde\bpsi)\|^2 $ with $ \tilde\bxi=\bxi-\bar \bxi $ and $ \tilde{\bpsi}=\bpsi-\bar \bpsi $. By using \eqref{distset} and the definition of $ G $, we obtain
	\begin{equation*}
	\dot{V}=\tilde\bxi^\top \Pi_{\Lambda}\big(\bxi,-F(\bxi)-G\bpsi\big)+\tilde{\bpsi}^\top G^\top\bxi.
	\end{equation*}
	By  Moreau's decomposition theorem, we find that
	\begin{equation*}
	\begin{split}
	\tilde\bxi^\top \Pi_{\Lambda}\big(\bxi,-F(\bxi)-G\bpsi\big)&=\tilde\bxi^\top\Big(-F(\bxi)-G\bpsi\\
	&-\text{proj}_{\caln_{\Lambda}(\bxi)}\big(-F(\bxi)-G\bpsi\big)\Big).
	\end{split}
	\end{equation*}
	Noting $ \bar\bxi\in \Lambda $, we have
	\begin{equation*}
	-\tilde\bxi^\top \text{proj}_{\caln_{\Lambda}(\bxi)}\big(-F(\bxi)-G\bpsi\big)\leq 0,
	\end{equation*}
	and the time derivative of $ V $ admits the following inequality
	\begin{equation}\label{dot_v_set}
	\dot{V}\leq -\tilde\bxi^\top F(\bxi)- \tilde\bxi^\top G\bpsi+\tilde{\bpsi}^\top G^\top\bxi.
	\end{equation}
Moreover, from \eqref{equilSet} and Moreau's decomposition theorem we get
	\begin{equation*}
	\begin{split}
	0&=\tilde{\bxi}^\top \Pi_{\Lambda}\big(\bar \bxi,-F(\bar \bxi)-G\bar \bpsi\big)\\
	&=\tilde{\bxi}^\top\Big(-F(\bar\bxi)-G\bar \bpsi-\text{proj}_{\caln_{\Lambda}(\bar\bxi)}\big(-F(\bar\bxi)-G\bar \bpsi\big)\Big).
	\end{split}
	\end{equation*}
	Since
	\begin{equation*}
	-\tilde{\bxi}^\top\text{proj}_{\caln_{\Lambda}(\bar\bxi)}\big(-F(\bar\bxi)-G\bar \bpsi\big)\geq 0,
	\end{equation*}
	we conclude that $
	\tilde{\bxi}^\top\Big(F(\bar\bxi)+G\bar \bpsi\Big)\geq 0
	$, which can be employed to rewrite \eqref{dot_v_set} as
		\begin{equation*}
	\begin{split}
	\dot{V}&\leq -\tilde{\bxi}^\top \left(F(\bxi)-F(\bar\bxi)\right)- \tilde\bxi^\top G\tilde{\bpsi}+\tilde{\bpsi}^\top G^\top\bxi\\
	&\leq -\epsilon \|\tilde{\bxi}\|^2,
	\end{split}
	\end{equation*}
	where the last inequality is obtained by using \eqref{equilSet} and the fact that $ F(\bxi) $ is $ \epsilon $-strongly monotone. By following an analogous argument to \cite[Thm. 2]{de2018distributed}, we conclude   that the solution $ ({\bx},{\bsg},\bpsi) $ converges to  the set 
$
\Omega=\left\{({\bx},{\bsg},\bpsi ) \mid \bx=\bar{\bx},\,  \bsg=\bar{\bsg}, {\bm{\psi}\in \Psi} \right \}
$. 
%, and in turn,	$ \bpsi $ converges to $ \bar \bpsi $.
{Noting that $(\bone_N^\top \otimes I_n)\bpsi(t)$ is an invariant quantity of the system, similar to Theorem \ref{th:conv}, we conclude that $\bpsi$ converges to $\bpsi^*$.}  
%\todoiny{If needed, explicit arguments rather than ``analogous argument to \cite[Thm. 2]{de2018distributed}"  can be included.}
\end{proof}
\section{Case studies}\label{simulation}
In this section, we consider two illustrative case studies that are formulated as aggregative games. 
%We  employ the presented algorithm to find the NE of these games. 
%It is worth noting that in the second case study, due to complexity of finding a projection operator, a modification of the algorithm is used which converges to the NE.

\subsection{Energy Consumption Game}
This case study considers the energy consumption problem of consumers with heating ventilation air conditioning (HVAC) systems in smart grids. As proposed in \cite{ma2014distributed}, this problem can be formulated into a noncooperative game where each consumer chooses its energy consumption such that the following payoff function is minimized
\begin{equation*}
J_i(x_i,\avg{x})=\theta \gamma^2 (x_i-\hat{x}_i)^2+(aN\avg{\bx}+b)x_i,
\end{equation*}
where the positive constant parameters $ \theta $, $ \gamma $, and $ a $ are the cost, the thermal, and the price-elasticity coefficients, respectively. The scalar $ b\in\R_{>0} $ is a basic price for unite energy consumption, $ x_i\in \calx_i $ is the energy consumption of consumer $ i $,  $ \hat{x}_i\in \calx_i $ is the required energy consumption for maintaining the target indoor temperature, and $ N\avg{\bx}=\sum_{j\in\cali}x_j $ is the total energy consumption.  The action set $ \calx_i\subset \R $  is defined as $ \calx_i:=\big\{x_i\in\R\mid x_i\in[\underline{x}_i,\bar x_i]\big\} $ where the positive constants $ \underline{x}_i $ and $ \bar x_i $ are the minimum and maximum acceptable energy consumption, respectively, with $ \underline{x}_i<\bar x_i $. According to \cite[Thm. 1]{ma2014distributed}, this game has a unique NE if 
\begin{equation*}
a\leq 2\theta \gamma^2/(N-3) ,
\end{equation*}
for $ N>3 $. 
If we use Lemma~\ref{lemNEset}, the sufficient condition for having a unique NE is $ a\leq 2\theta \gamma^2/(N-1) $ for $ N>1 $, which is slightly more restrictive than the above condition. 
However, considering Remark~\ref{r:relax}, we need to find $ k_i>0 $ such that the mapping $ \col(k_i f_i(x_i,\sigma_i),\sigma_i-x_i) $ with $ f_i(x_i,\sigma_i)=(2\theta \gamma^2+a)x_i+aN \sigma_i-2\theta \gamma^2 \hat{x}_i +b$ is strongly monotone. By performing the calculations, we obtain that for all $ a\in \R_{>0} $ and $ N\geq 1 $, the mapping is strongly monotone if 
%\nmargin{to check the expressions}
\begin{multline*}
k_i\in \big(\frac{(\sqrt{2\theta \gamma^2+a}-\sqrt{2\theta \gamma^2+a(N+1)})^2}{(aN)^2},\\
\frac{(\sqrt{2\theta \gamma^2+a}+\sqrt{2\theta \gamma^2+a(N+1)})^2}{(aN)^2}\big),
\end{multline*}
which means that we need less restrictive assumptions to guarantee uniqueness of the NE and convergence of the algorithm. We consider $ N=5 $ players in this game, i.e., $ \cali=\{1,\cdots,5\} $, with $ \theta \gamma^2 $ normalized to one, $ \col((\hat x_i )_{i\in \cali})=\col(50,55,60,65) (\text{kWh})$, $ \col((\bar x_i)_{i\in \cali})=\col(60,66,72,78,84) (\text{kWh})$, $ \col((\underline x_i)_{i\in \cali})=\col(40,44,46,52,56) (\text{kWh})$, $ a=0.04 $, and $ b=5 (\text{\textdollar}/(\text{kWh}))$ \cite{ye2016game}. To implement the algorithm, the players are assumed to communicate through a connected undirected graph depicted in Fig.~\ref{fig:graph_HVAC}. Each player randomly chooses the design parameter $ k_i $  in the above interval. The initial conditions of $ \sigma_i $ and $ \psi_i $ are chosen randomly, and $ x_i(t_0)=0.5(\bar x_i+\underline x_i) $. The resulting action variables are depicted in Fig.~\ref{fig:results_HVAC}. The fact that the players converge to the NE of the game can be verified by comparing the results to the NE computed in \cite[Sec. VI-C]{ye2016game}.

Next, we consider the case where {the action set is $ \calx_i=\R $ and} bounded disturbances affect the dynamics, and investigate its robustness. The disturbance vector $ \bnu\in \R^{10} $ is added according to \eqref{dist_dt}, five elements of which are considered as uniformly distributed random numbers in the interval $ [-20,20] $ with the sampling time $ 0.1 $(s), and the other five elements are sinusoidal signals with  amplitudes between 10 and 20, and frequencies between 5 to 25(rad/s). As can be seen from Fig.~\ref{fig:results_ISS_HVAC}, the action variables remain bounded, which is consistent with our ISS results. Note that the presence of disturbances results in  deviation of the asymptotic behavior from the NE. 
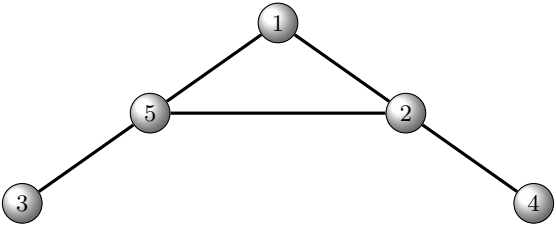
\begin{figure}
	%\begin{center}
	\[\begin{tikzpicture}[x=1.7cm, y=1.2cm,
	every edge/.style={sloped, draw, line width=1.2pt}]
	%\useasboundingbox (0,-3) rectangle (6.0cm,5.0cm);
	
	\vertex (v1) at (0,0)  {\small $1$};
	\vertex (v2) at (1,-1) {\small $2$};
	\vertex (v5) at (-1,-1) {\small $5$};
	\vertex (v4) at (2,-2) {\small $4$};
	\vertex (v3) at (-2,-2) {\small $3$};
	%\vertex (v4) at (5,0) {\small $I_2$};
	%\tikzstyle{vertex}=[shading = ball, ball color = white!100!white, minimum size=12pt, draw, inner sep=0pt]
	%\vertex (v5) at (1,0) {\small $L_1$};
	%\vertex (v6) at (4,1) {\small $L_2$};
	%%\vertex (v7) at (4,2) {\small $7$};
	%%\vertex (v8) at (4,4) {\small $8$};
	%%\vertex (v9) at (6,2) {\small $9$};
	%%\vertex (v10) at (6,0){\small $10$};
	%
	\path
	(v1) edge  (v2)
	(v1) edge (v5)
	(v2) edge (v5)
	(v2) edge (v4)
	(v5) edge (v3);
	%(v2) edge node[anchor=north]{\weight{\rm 0.05}}(v3)
	%(v2) edge node[anchor=north]{\weight{\rm 0.06}}(v4)
	%(v2) edge node[anchor=south]{\weight{\rm 0.08}}(v6)
	%(v4) edge node[anchor=south]{\weight{\rm 0.1}}(v6)
	%(v3) edge node[anchor=north]{\weight{\rm 0.1}}(v5)
	%(v1) edge[dashed] node[left]{}(v3)
	%(v2) edge[dashed, bend right,in=210,out=330] node[left]{}(v3)                ;
	\end{tikzpicture}
	\]
	%\end{center}
	\caption{Communication Graph in HVAC Example.}\label{fig:graph_HVAC}
\end{figure}

\begin{figure}
	\centering
	\includegraphics[width=3in]{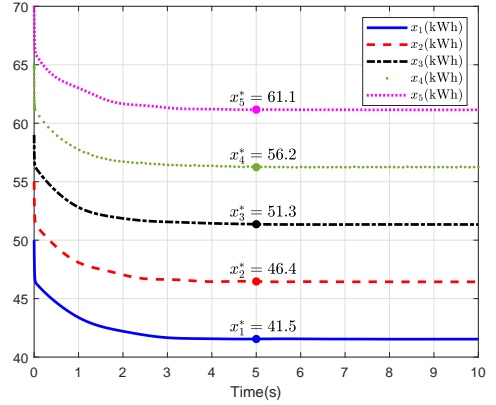}
	\caption{Action variables of consumers with HVAC systems.}
	\label{fig:results_HVAC}
\end{figure}

\begin{figure}
	\centering
	\includegraphics[width=3in]{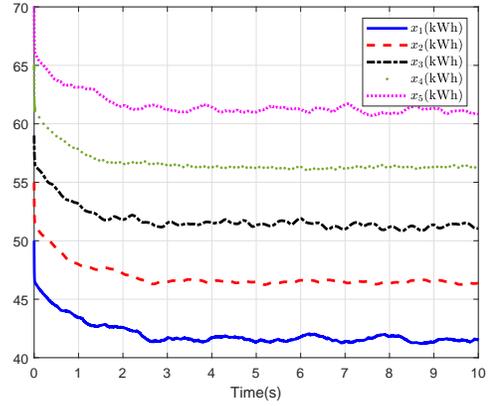}
	\caption{Action variables of consumers with HVAC systems in the presence of disturbances.}
	\label{fig:results_ISS_HVAC}
\end{figure}

\subsection{Charging Coordination of Electric Vehicles}
In this subsection, we consider the problem of charging coordination for a population $ \cali=\{1,\cdots,N\} $ of plug-in electrical vehicles (PEVs) \cite{ma2011decentralized,ma2015distributed}.  Each agent is aimed at minimizing its payoff function defined as the summation of its electricity bill and a quadratic function as follows 
\begin{equation*}
J_i(x_i,\avg{\bx})=\sum_{t\in\calt}\big(a(d_t+N\avg{\bx^t})+b\big) x_i^t+q_i(x_i^t)^2+c_i x_i^t,
\end{equation*}
where $ \calt:=\{1,\cdots,n\} $ is the charging horizon, $ x_i=\col\big((x_i^t)_{t\in\calt}\big) $ is the collection of charging control of the $ i $-th vehicle at time $ t $, the positive constants $ a $ and $ b $ respectively are the price-elasticity coefficient and basic price, $ d_t $ is the total non-PEV demand,  $ N\avg{\bx^t}=\sum_{j\in\cali}x_j^t $ is the total PEV demand at time $ t $, and $ q_i $ and $ c_i $ are positive constant parameters. In the payoff function, the quadratic term models battery degradation cost of PEVs \cite{ma2015distributed}. For each agent, the charging rate $ x_i^t $ is bounded as $ 0\leq x_i^t\leq \bar x_i $ and its summation for all $ t\in\calt $ should be equal to the required energy of the agent defined as $ \gamma_i $. Therefore, the constraint set of $ x_i $ is $ \calx_i:=\calx_i^1\cap\calx_i^2 $ where
\begin{equation}\label{pev_sets}
\begin{split}
\calx_i^1:&=\big\{x_i\in\R^{n}\mid x_i^t\in[0,\bar x_i]\big\}\\
\calx_i^2:&=\big\{x_i\in\R^{n}\mid  \sum_{t\in\calt}x_i^t=\gamma_i\big\}.
\end{split}
\end{equation} 
In practice, it is assumed that $ n\bar x_i\geq \gamma_i $ to grantee that $ \calx_i $ is non-empty. The goal is to reach to the NE and schedule charging strategies for the entire horizon, and in this regard, a gather and broadcast algorithm is presented in \cite{ma2015distributed} which guarantees convergence when $ q_i>aN $ \cite[Thm. 3.1]{ma2015distributed}. 

In this problem, we have $ f_i(x_i,\sigma_i)=(2q_i+a)x_i+aN\sigma_i+ad+(b+c_i)\bone_n $ with $ d=\col\big((d_t)_{t\in\calt}\big) $; therefore, the mapping $ \col(k_i f_i(x_i,\sigma_i),\sigma_i-x_i) $ is strongly monotone by choosing
\begin{multline*}
k_i\in \big(\frac{(\sqrt{2q_i+a}-\sqrt{2q_i+a(N+1)})^2}{(aN)^2},\\
\frac{(\sqrt{2q_i+a}+\sqrt{2q_i+a(N+1)})^2}{(aN)^2}\big),
\end{multline*}
and there is no need for the assumption $ q_i>aN $. To reach  the NE, each agent can implement \eqref{distset_indv}; however, since $ \calx_i $ is the intersection of two sets, it is not easy to find a closed-form expression for the projection operator $ \Pi_{\calx_i}\left(x_i,\cdot\right) $. To overcome this challenge, {we use the fact that 
%we use the fact that in this game, the players only  the algorithm to find the NE, and implement the NE to schedule their charging strategies. This means that  
$ x_i(t) $ in the NE dynamics does not need to belong to $ \calx_i $ for all $ t\geq t_0 $, yet it should converge to the NE inside this set. Therefore, $x_i\in\calx_i^2$ can be treated as a ``soft constraint". Hence, we modify  \eqref{distset}  as follows}
\begin{equation}\label{ne_alg_lagrange}
\begin{split}
\dot{\bx}&=\Pi_{\calx^1}\left(\bx,-\bk\col\big((f_i(x_i,\sigma_i))_{i\in\cali}\big)-(I_N\otimes \bone_n)\bm{\lambda}\right)\\
\dot{\bsg}&=- \bsg+\bt\bx-(L\otimes I_n)\bpsi\\
\dot{\bpsi}&=(L\otimes I_n)\bsg\\
\dot{\bm{\lambda}}&=(I_N\otimes \bone_n^\top)\bx-\bm \gamma,
\end{split}
\end{equation} 
where $ \bm{\lambda}=\col((\lambda_i)_{i\in\cali}) $ with the Lagrangian multiplier $ \lambda_i\in \R $, $\bm\gamma=\col((\gamma_i)_{i\in\cali})$,  and $ \calx^1=\prod_{i\in \cali}\calx_i^1 $ with $ \calx_i^1 $  defined in \eqref{pev_sets}. 
A supplementary discussion on the convergence of the above algorithm to the NE is provided in  Appendix B.

A population of $ N=100 $ players, that can communicate by a connected undirected graph, are considered in this game, and the charging horizon is from 12:00 a.m. on one day to 12:00 a.m. on the next day. In order to generate the numerical parameters, we consider some nominal values and randomize them similar to \cite{grammatico2017dynamic}. In the price function, $ a=3.8\times 10^{-3} $ and $ b=0.06(\text{\textdollar}/(\text{kWh})) $ are considered. The parameters of the quadratic functions are uniformly distributed random numbers as $ q_i \sim \{0.004\}+[-0.001,\, 0.001]$ and $ c_i \sim \{0.075\}+[-0.01,\, 0.01]$. In order to generate $ \gamma_i $, inspired by \cite{ma2015distributed}, we assume that the battery capacity size of PEVs are $ \Phi_i\sim \{30\}+[-5,\, 5](\text{kWh}) $, the initial  states of charge ($\text{SOC}_{i_0}$) of PEVs satisfy a Gaussian distribution with the mean 0.5 and variance 0.1, and the  final  state of charge ($\text{SOC}_{i_f}$) equals to 0.95; thus, $ \gamma_i=\Phi_i(\text{SOC}_{i_f}-\text{SOC}_{i_0}) $. In addition, the maximum admissible charging control is set to  $ \bar x_i \sim \{10\}+[-2,\, 2](\text{kWh})$.

We select the design parameter of the algorithm as $ k_i=(2(2q_i+a)+aN)/(aN)^2 $, the initial condition of action variables are $ x_i^t(t_0)=\gamma_i/n $, and $ \sigma_i(t_0) $, $ \psi_i(t_0) $, and $ \lambda_i(t_0) $ are selected randomly. Fig.~\ref{fig:results_PEV} illustrates total demand and total non-PEV demand. As can be seen, the PEVs shifted their charging intervals to the nighttime, which minimizes their effects on the grid, and as explained in \cite{ma2011decentralized}, the NE has the desired ``valley filling'' property.

\begin{figure}
	\centering
	\includegraphics[width=3in]{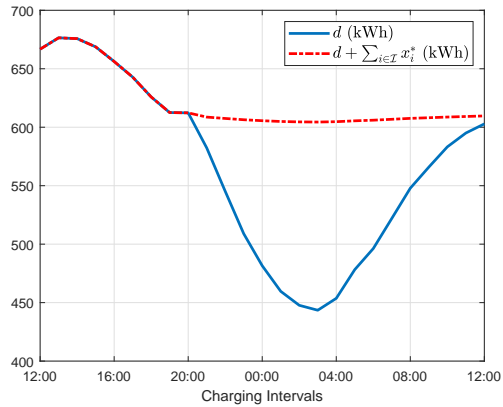}
	\caption{Total  non-PEV demand $ d $ and its summation with total-PEV demand at the equilibrium $ d+\sum_{i\in\cali}x_i^* $.}
	\label{fig:results_PEV}
\end{figure}

\section{Conclusions}\label{conclusion}
{By employing the structure of aggregative games, we presented a distributed NE seeking algorithm where each player calculates its action variable through computing an estimation of the aggregation term. We provided sufficient  conditions for  convergence of the  algorithm  to the NE of the game. 
We have provided 	
privacy guarantees for the algorithm by showing that private information of the players cannot be reconstructed even if all communicated variables are accessed by an adversary. {Raised by practical concerns about the accuracy of payoff functions or rationality of the players, we proved robustness of the proposed algorithm against time-varying disturbances in the sense of ISS.} 
Finally, we extended the algorithm to the case of constrained action sets  by using projection operators. Extension of the results to games with coupling constraints is left for future work. Moreover, a challenging task is to provide robustness guarantees in the presence of projections. 
Another notable research questions is to use aggregative game dynamics as optimal controllers steering a physical system.  Examples of the latter in Cournot and Bertrand competitions can be found in \cite{de2019feedback} and  \cite{stegink2018hybrid}, respectively.}
\bibliographystyle{IEEEtran}
\bibliography{MyReferences}
%\pagebreak
%Conditions for existence are similar to Grammtico paper on ``convexity on aggregative games...'' and on ``network games...'' by Parise. 

\appendices
\section*{Appendix A: Proofs of the Lemmas}
%\begin{proof}%{Lemma~\ref{pro1}}
\noindent
\textit{Proof of Lemma~\ref{pro1}.} 	
$ (i) $ The mapping $ F $ is $ \epsilon $-strongly monotone if 
	\begin{multline}\label{ep_mono}
		\col\big(\bx-{\bx}',\bsg-{\bsg}'\big)^\top 
		\big(F(\bx, \bsg)- F(\bx', \bsg')\big)\geq \\
		\epsilon \|\bx-{\bx}'\|^2	+\epsilon \|\bsg-{\bsg}'\|^2
	\end{multline}
	for all $ \bx,\bx'\in \calx=\prod_{i\in \cali}\calx_i $ and $ \bsg,\bsg'\in\R^{nN} $.  By  using \eqref{F} and Assumption~\ref{asmp1},  we have
	\begin{equation*}%\label{lef1}
		\begin{split}
			&(\bx-\bx')^\top \bk \,\col\big((f_i(x_i,{\sigma}_i)-f_i(x_i',{\sigma}_i))_{i\in\cali}\big)\\
			+&(\bx-\bx')^\top \bk \, \col\big((f_i(x_i',{\sigma}_i)-f_i(x_i',{\sigma}_i'))_{i\in\cali}\big)\\
			+&(\bsg-{\bsg}')^\top  \big(({\bsg}-\bt\bx)-(\bsg'-\bt\bx')\big)\\
			\geq &\sum_{i\in\cali} k_i \mu_i \|x_i-x'_i\|^2-(k_i\ell_i+h_i) \|x_i-x'_i\|\|\sigma_i-{\sigma}'_i\|\\
			+&\|\sigma_i-{\sigma}'_i\|^2.
		\end{split}
	\end{equation*}
	As a result, to establish the inequality in  \eqref{ep_mono}, it is sufficient to define $ \epsilon:=\min\{\epsilon_i\} $ where $ \epsilon_i> 0$  satisfies
	\begin{multline*}
		k_i \mu_i \|x_i-x'_i\|^2-(k_i\ell_i+h_i) \|x_i-x'_i\|\|\sigma_i-{\sigma}'_i\|\\+\|\sigma_i-{\sigma}'_i\|^2\geq \epsilon_i\|x_i-x'_i\|^2+\epsilon_i\|\sigma_i-{\sigma}'_i\|^2.
	\end{multline*}
	%	Because $ \epsilon_i $ can become arbitrary small, we only need to have
	{Clearly, such $\epsilon_i$ exists providing that}
	\begin{equation*}
		\begin{bmatrix}
			k_i \mu_i & -\frac{(k_i\ell_i+h_i)}{2}\\
			-\frac{(k_i\ell_i+h_i)}{2} & 1
		\end{bmatrix}>0.
	\end{equation*}
	{The above positive definiteness condition holds if and only if}
	%	Consequently, $ k_i $ should satisfy $ 4k_i \mu_i- (k_i\ell_i+t_i)^2>0$, and it results into the interval
	\begin{multline*}
		k_i\in \big(\frac{2\mu_i-\ell_ih_i-2\sqrt{\mu_i(\mu_i-\ell_ih_i)}}{\ell_i^2},\\
		\frac{2\mu_i-\ell_ih_i+2\sqrt{\mu_i(\mu_i-\ell_ih_i)}}{\ell_i^2}\big),
	\end{multline*}
	which is equivalent to \eqref{inter}.
	
	$ (ii) $ Let $ {\bsg}=\bone_N\otimes \avg{\bx} $  and $ {\bsg'}=\bone_N\otimes \avg{\bx'} $. By using the definition of $ \avg{\bx} $ we get $ \bsg-\bsg'=\bone_N\otimes \avg{\bx-\bx'} $. Hence,  inequality \eqref{ep_mono}, proven in part $(i)$, becomes
	\begin{equation}\label{ineqKF1}
		\begin{split}
			&(\bx-\bx')^\top \bk \,\col\big((f_i(x_i,\avg{\bx})-f_i(x_i',\avg{\bx'}))_{i\in\cali}\big)\\
			+&(\bone_N\otimes \avg{\bx-\bx'})^\top  \big((\bone_N\otimes \avg{\bx-\bx'})-\bt(\bx-\bx')\big)\\
			\geq &\epsilon \|\bx-{\bx}'\|^2+\epsilon \|\bone_N\otimes \avg{\bx-\bx'}\|^2.%\geq \epsilon \|\bx-{\bx}'\|^2
		\end{split}
	\end{equation}
	Let 
	\begin{equation}\label{pi}
		\Pi:=I-{ \frac{1}{N}}\bone_{N}\bone_{N}^\top.
	\end{equation}
	Then,
	\begin{equation*}
		\begin{split}
			(\bone_N\otimes \avg{\bx-\bx'})-\bt(\bx-\bx')= -(\Pi\otimes I_n)\bt(\bx-\bx'),
		\end{split}
	\end{equation*}
	where we have used the equality  $$ \bone_N\otimes \avg{\bx-\bx'}=\frac{1}{N}(\bone_N\otimes\bone_N^\top\otimes I_n)\bt(\bx-\bx') .$$	Therefore, the second term on the left hand side of \eqref{ineqKF1} is zero as $ \bone_N^\top\Pi=0 $, and the proof is complete.\hfill$\square$
%\end{proof}

%\begin{proof}%{Lemma~\ref{lemNE}}
\bigskip
\noindent
\textit{Proof of Lemma~\ref{lemNE}.}
		First, we  show that each cost function $ J_i(x_i,\avg{\bx}) $ is $ \eta_i $-strongly convex in $ x_i $ for all $ \bx_{-i}\in \calx_{-i}=\prod_{j\neq i}\calx_j $, and for that, it  suffices  $ f_i(x_i,\avg{\bx}) $  to be $ \eta_i $-strongly monotone in $ x_i $, i.e.,
	\begin{multline*}
	(x_i-x_i')^\top \big(f_i(x_i,\frac{h_i}{N}x_i+\frac{1}{N}\sum_{j\neq i}h_jx_j)-\\
	f_i(x_i',\frac{h_i}{N}x_i'+\frac{1}{N}\sum_{j\neq i}h_jx_j)\big)\geq \eta_i\|x_i-x_i'\|^2,
	\end{multline*}
	for all $ x_i,x_i'\in\calx_i $, $ \bx_{-i}\in\calx_{-i} $, and some $ \eta_i>0 $. We can use Assumption~\ref{asmp1} and rewrite the left hand side of the above inequality as
	\begin{equation}\label{propStrong}
	\begin{split}
	(x_i&-x_i')^\top \big(f_i(x_i,\frac{h_i}{N}x_i+\frac{1}{N}\sum_{j\neq i}h_jx_j)- \\
	f_i&(x_i',\frac{h_i}{N}x_i+\frac{1}{N}\sum_{j\neq i}h_jx_j)\big)+\\
	(x_i&-x_i')^\top \big(f_i(x_i',\frac{h_i}{N}x_i+\frac{1}{N}\sum_{j\neq i}h_jx_j)-\\
	f_i&(x_i',\frac{h_i}{N}x_i'+\frac{1}{N}\sum_{j\neq i}h_jx_j)\big)\geq (\mu_i-\frac{\ell_ih_i}{N}) \|x_i-x_i'\|^2.
	\end{split}
	\end{equation}
	Thus, noting $ \eta_i:= \mu_i-\frac{\ell_ih_i}{N}$ and $ \mu_i>\ell_ih_i $, we conclude that $ J_i(x_i,\avg{\bx}) $ is $ \eta_i $-strongly convex.	By leveraging this property and continuity of $ J_i $ in Assumption~\ref{asmp0}, we conclude that each cost function is radially unbounded with respect to its action \cite[Prop. 1]{baker2016strong}.	Therefore, it follows from \cite[Cor. 4.2]{basar1999dynamic} that the game has an NE $ \bar\bx $ which satisfies
	\begin{equation*}
	\col\big(f_i(\bar x_i,\avg{\bar\bx})_{i\in\cali}\big)=\bze.
	\end{equation*}
	For uniqueness of the NE, we resort to a proof by contradiction. Let $ \bar \bx $ and $ \bx'  $ be two different NE that satisfy the above equality. Choosing $k_i$ according to \eqref{inter} for each $i\in \cali$, by Lemma~\ref{pro1}$ (ii) $,  we find  that
	\begin{multline*}
	(\bar \bx-\bx')^\top \bk \, \col\big((f_i(\bar x_i,\avg{\bar\bx})-f_i(x_i',\avg{\bx'}))_{i\in\cali}\big)=0\\
	\geq \epsilon \|\bar \bx-{\bx}'\|^2,
	\end{multline*}
	{which holds if and only if}  $ \bar \bx = \bx' $, and we reach a contradiction.  This completes the proof.\hfill$\square$
%\end{proof}

%\begin{proof}%{Lemma~\ref{lemNEset}}
\bigskip
\noindent
\textit{Proof of Lemma~\ref{lemNEset}.}
		Under Assumption~\ref{asmpset}, the game admits an NE if $ J_i(x_i,\avg{\bx}) $ is strictly convex in $ x_i $ for all  $ \bx_{-i}\in \calx_{-i}=\prod_{j\neq i}\calx_j $ \cite[Thm. 4.3]{basar1999dynamic}, which is satisfied as a consequence of Assumption~\ref{asmp1} (see \eqref{propStrong}). % \nmargin{there is no proposition now}
	We also know that  $ \bar\bx\in \calx $ is an NE if and only if it is a solution of the variational inequality VI$ (\calx,\col\big((f_i(x_i,\avg{\bx}))_{i\in\cali}\big)) $ \cite[Prop. 1.4.2]{facchinei2007finite}. Moreover, since $ \bk\col\big((f_i(x_i,\avg{\bx}))_{i\in\cali}\big) $ is strongly monotone (Lemma~\ref{pro1}$ (ii) $) and $ \calx $ is closed and convex, the variational inequality VI$ (\calx,\bk\col\big((f_i(x_i,\avg{\bx}))_{i\in\cali}\big)) $ has a unique solution $ \bx'\in \calx $ \cite[Thm. 2.3.3]{facchinei2007finite}. Lastly, we need to show that $ \bar\bx $ is unique and equal to $ \bx' $.
	
	Clearly, we have
	\begin{equation*}
	(\bx-\bar{\bx})^\top \col\big((f_i( \bar x_i,\avg{ \bar \bx}))_{i\in\cali}\big)\geq 0,\quad \forall \bx\in\calx,
	\end{equation*}
	which can be rewritten as
	\begin{equation*}
	\sum_{i\in\cali} (x_i-\bar{x}_i)^\top f_i( \bar x_i,\avg{ \bar\bx})\geq 0 ,\quad \forall \bx\in\calx.
	\end{equation*}
	Let for all $ i\in\cali\setminus \{j\} $, we have $ x_i=\bar{x}_i $; thus, by using $ k_j>0 $, the above inequity yields
	\begin{equation*}%\label{vi0}
	k_j(x_j-\bar{x}_j)^\top f_j(\bar x_j,\avg{\bar \bx})\geq 0.
	\end{equation*}
	By performing the same procedure for the other components of $ \bar \bx $ and re-writing all obtained inequalities into the vector form, we can see that $ \bar \bx $ is the  solution of VI$ (\calx,\bk\col\big((f_i(x_i,\avg{\bx}))_{i\in\cali}\big)) $, i.e., $ \bar \bx= \bx'$. Consequently, since $ \bar \bx $ is an arbitrary solution and $ \bx' $ is unique,  both variational inequality problems have an identical solution, which concludes the proof.	\hfill$\square$
%\end{proof}
\section*{Appendix B}
\subsection*{On convergence of the Modified NE Seeking Algorithm \eqref{ne_alg_lagrange}:}
Similar to  \eqref{distset}, we can guarantee that for any initial condition  $ ({\bx}(t_0),{\bsg}(t_0),\bpsi(t_0),\bm\lambda(t_0) )\in \calx^1 \times \R^{nN}\times \R^{nN}\times \R^N$, the solution of \eqref{ne_alg_lagrange} is unique and belongs to $ \calx^1 \times \R^{nN}\times \R^{nN} \times \R^N$ for almost all $ t\geq t_0 $. We claim that such a  solution converges to an equilibrium corresponding to the NE of the game. To this end, consider the equilibrium point $ (\bar {\bx},\bar {\bsg},\bar \bpsi,\bar {\bm\lambda}) $ with $ \bar \bsg=\bone_N\otimes \avg{\bar\bx} $, $ (L\otimes I_n)\bar\bpsi= (\Pi\otimes I_n)\bt\bar \bx $, and
\begin{equation}\label{PEV_eqli_proj}
\bze=\Pi_{\calx^1}\big(\bar \bx,-\bk\col\big((f_i(\bar x_i,\avg{\bar\bx}))_{i\in\cali}\big)-(I_N\otimes \bone_n)\bar {\bm\lambda}\big),
\end{equation}
\begin{equation}\label{PEV_eqli_x2}
\\%\label{PEV_eqli_proj}\\
\bze=(I_N\otimes \bone_n^\top)\bar \bx-\bm\gamma.%\label{PEV_eqli_x2}
\end{equation}
The second equality implies that $ \bar \bx \in \calx^2= \prod_{i\in \cali}\calx_i^2$. Employing Moreau's decomposition theorem and \eqref{PEV_eqli_proj}, we can perform an analogous analysis to the proof of Theorem~\ref{the:convSet} and conclude that $ \bar \bx $ is the solution of VI$ (\calx^ 1,\bk\col\big((f_i( x_i,\avg{ \bx}))_{i\in\cali}\big)+(I_N\otimes \bone_n)\bar {\bm\lambda}) $. This means that $ \bar \bx $ is the solution of the following optimization problem \cite[Eq. 1.2.1]{facchinei2007finite}
\begin{equation*}
\min_{y\in\calx^1}y^\top (\bk\col\big((f_i( \bar x_i,\avg{\bar  \bx}))_{i\in\cali}\big)+(I_N\otimes \bone_n)\bar {\bm\lambda}).
\end{equation*}
Let $ g_i^t(\bx):=\col(x_i^t-\bar x_i,-x_i^t)$, $ g_i(\bx):=\col\big((g_i^t(\bx))_{t\in\calt}\big) $, and $ g(\bx):=\col\big((g_i(\bx))_{i\in\cali}\big) $; then we see that $ g(\bx)\leq 0 $ represents the set $ \calx^1 $. Therefore, there exist $ \mu_i^t\in\R^2 $ that satisfy the following KKT conditions

\begin{alignat*}{2}
0&=\bk\col\big((f_i( \bar x_i,\avg{\bar  \bx}))_{i\in\cali}\big)&&+(I_N\otimes \bone_n)\bar {\bm\lambda}\\
&  &&+\sum_{i\in\cali}\sum_{t\in\calt} \fpart{}{x}g_i^t(\bx)^\top \mu_i^t\\
0&\leq \bm{\mu} \perp g(\bx)\leq 0,
\end{alignat*}
where $ \bm{\mu}:=\col\big((\mu_i)_{i\in\cali}\big) $ with $ \mu_i:=\col\big((\mu_i^t)_{t\in\calt}\big) $. Considering the above equations together with \eqref{PEV_eqli_x2}, we conclude from \cite[Prop. 1.3.4(b)]{facchinei2007finite} that $ \bar \bx $ is the solution of VI$ (\calx,\bk\col\big((f_i( x_i,\avg{ \bx}))_{i\in\cali}\big)) $, and in turn, it is the NE of the game. Convergence of the algorithm is similar to Theorem~\ref{the:convSet}.

\end{document}